\newtheorem{theorem}{Theorem}
\newtheorem{proposition}[theorem]{Proposition}
\newtheorem{lemma}[theorem]{Lemma}
\newtheorem{example}{Example}
\newtheorem{definition}[theorem]{Definition}
\def\di{\displaystyle}
\newcommand{\R}{\mathbb{R}}
\newcommand{\TT}{\mathbb{T}}
\newcommand{\RN}{\mathbb{R}^{N+1}}
\newcommand{\Rn}{\mathbb{R}^{N-1}}
\newcommand{\LL}{\mathscr{L}}
\newcommand{\CC}{\mathscr{C}}
\newcommand{\Q}{\boldsymbol{Q}}
\newcommand{\ZZ}{\boldsymbol{Z}}
\newcommand{\T}{\boldsymbol{T}}
\newcommand{\W}{\boldsymbol{W}}
\newcommand{\PP}{\overline{P}}
\newcommand{\OO}{\overline{O}}
\newcommand{\fonction}[5]{\begin{array}[t]{lrcl}#1 :&#2 &\longrightarrow &#3\\&#4& \longmapsto &#5 \end{array}}
\newcommand{\fonctionsansens}[3]{\begin{array}[t]{lrcl}#1 :&#2 &\longmapsto &#3 \end{array}}
\begin{document}
%%%%%%%%%%%%%%%%%%%%%%%%%%%%%%%%%%%%%%%%%%%%%%%%%%%%%
%%%%%%%%%%         Titre, Headings
%%%%%%%%%%%%%%%%%%%%%%%%%%%%%%%%%%%%%%%%%%%%%%%%%%%%%
\title{Helmholtz's inverse problem of the discrete calculus of variations}
\author{Lo\"ic Bourdin and Jacky Cresson}
\maketitle

\begin{abstract}
We derive the discrete version of the classical Helmholtz's condition. Precisely, we state a theorem characterizing second order finite differences equations admitting a Lagrangian formulation. Moreover, in the affirmative case, we provide the class of all possible Lagrangian formulations.
\end{abstract}

\tableofcontents

\section{Introduction}\label{section1}
A classical problem in Analysis is the well-known {\it Helmholtz's inverse problem of the calculus of variations} (see \cite[p.71]{doug}, \cite{helm} and \cite[p.377]{olve}): find a necessary and sufficient condition under which a (system of) differential equation(s) can be written as an Euler-Lagrange equation and, in the affirmative case, find all the possible Lagrangian formulations. \\

This problem has been studied by numerous authors and has been completely solved by A. Mayer \cite{maye} and A. Hirsch \cite{hirs,hirs2}. The formulation that we use is due to V. Volterra \cite{volt}. Precisely, let $O$ be a second order differential operator. Then, the differential equation $O(q)=0$ can be written as a second order Euler-Lagrange equation if and only if all the Frechet derivatives of $O$ are self-adjoint. This condition is usually called \textit{Helmholtz's condition}. We refer to \cite{olve} for a modern presentation and a complete proof of this theorem. \\

A more difficult problem is to deduce from this characterization a complete classification of second order differential equations admitting a variational formulation. This has been only solved in dimension three by J. Douglas in his seminal paper \cite{doug} following a previous work of D.R. Davis \cite{davi,davi2}. We refer to \cite[p.74-75]{doug} and \cite[p.377-379]{olve} for a historical survey.\\

In recent years, an increasing activity has been devoted to a discrete version of the calculus of variations in the context of the geometric numerical integration. We refer to the book \cite{lubi} and the review paper \cite{mars} for more details (see also \cite{cmt}). In this context, a {\it second order discrete Euler-Lagrange equation} is given by: 
\begin{multline}
\di \frac{\partial L_-}{\partial x} ( \Q,\Delta_- \Q,\T,h )+\di \frac{\partial L_+}{\partial x} ( \Q,-\Delta_+ \Q,\T,h ) \\ + \Delta_+ \left( \frac{\partial L_-}{\partial v} ( \Q,\Delta_- \Q,\T,h) \right) - \Delta_- \left( \frac{\partial L_+}{\partial v} ( \Q,-\Delta_+ \Q,\T,h) \right)  = 0, 
\end{multline}
for a given couple of Lagrangian $(L_-,L_+)$ and where $\T$ is a bounded regular partition of $\R$ associated to the step size of discretization $h$. $\Delta_-$ (resp. $\Delta_+$) is the backward (resp. forward) finite differences operator associated to $\T$.\\

In this framework, we formulate the {\it Helmholt'z inverse problem of the discrete calculus of variations} as follows: find a necessary and sufficient condition under which a second order finite differences equation (see Definition \ref{defSOFDE}) can be written as a second order discrete Euler-Lagrange equation and, in the affirmative case, find all the possible Lagrangian formulations. \\

This problem has been studied by numerous authors. We refer in particular to the work of Albu-Opris \cite{albu} and Cracium-Opris \cite{opri} as well as Hydon-Mansfeld \cite{hydon}. However, in each of these papers, the discrete Helmholtz's problem characterizes finite differences equations which correspond to Euler-Lagrange equations of a discrete Lagrangian functional that has not always a continuuum limit (see \cite[$\S$.5.3, p.213]{hydon}), \textit{i.e.} which does not correspond to a discretization of a continuous Lagrangian functional which is our main concern in this paper. As a consequence, these papers can not be used to provide an answer to the Helmholtz's problem in the context of geometric numerical integration. A discussion of this problem can be found in \cite[$\S$.5.3, p.213-214]{hydon}. 

\section{Second order finite differences equations}
\label{section2}

\subsection{Partitions and discrete derivative operators}
In the whole paper, let us consider the following set:
\begin{equation}
\TT := \{ \T = (t_p)_{p=0,...,N} \in \RN  \; \text{with} \; N \geq 4 \; \text{and} \; \exists h >0, \; \forall i=0,...,N-1, \; t_{i+1}-t_i = h \}.
\end{equation}
$\TT$ is a set of bounded regular partitions $\T$ of $\R$. Hence, for any partition $\T \in \TT$, an integer $N = \text{card} (\T) \geq 4$ and a step size of discretization $h > 0$ are associated. Consequently, for any $\T \in \TT$, we can also associate the following \textit{discrete derivative operators}:
\begin{equation}
\fonction{\Delta_-}{\R^{N+1}}{\R^{N}}{\Q}{ \left ( \dfrac{Q_{p} -Q_{p-1}}{h}  \right )_{p=1,\dots ,N}}
\end{equation}
and
\begin{equation}
\fonction{\Delta_+}{\R^{N+1}}{\R^{N}}{\Q}{\left ( \dfrac{Q_{p} -Q_{p+1}}{h}  \right )_{p=0,\dots ,N-1}.}
\end{equation}
Let us note that $\Delta_-$ (resp. $-\Delta_+$) is the classical backward (resp. forward) Euler approximation of the derivative operator $d/dt$. Moreover, $\Delta_-$ and $-\Delta_+$ commute and the discrete operator $-\Delta_+ \circ \Delta_- $ corresponds to the classical centered approximation of $d^2/dt^2$. \\

All these previous discrete elements depend on $\T \in \TT$. For simplicity, we omit this dependence in the notations. \\

In this paper, we are interested in the discretization of differential equations defined over real intervals $[a,b]$. For any reals $a<b$, we denote by $\TT_{a,b}$ the set of regular partitions $\T$ of the interval $[a,b]$ defined by :
\begin{equation}
\TT_{a,b} := \{ \T \in \TT \; \text{with} \; 0 \leq t_0 - a < h \; \text{and} \; 0 \leq b-t_N < h \}.
\end{equation}

\subsection{Properties of the discrete derivative operators} In this section, we first remind the classical discrete versions of the Leibniz formula and the integration by part. The proofs respectively follow from \cite[$\S$.2.51, p.34-35]{milne} and \cite[$\S$.2.64]{milne}.

\begin{lemma}[Discrete Leibniz formulas]\label{dlf}
Let $\T \in \TT$ and $\Q$, $\W \in \R^{N+1}$. It holds:
\begin{equation}
\forall p=1,...,N, \;\; (\Delta_- \Q\W)_{p} = (\Delta_- \Q)_{p} W_p + Q_{p-1} (\Delta_- \W)_{p} 
\end{equation}
and
\begin{equation}
\forall p=0,...,N-1, \; \;  (\Delta_+ \Q\W)_{p} = (\Delta_+ \Q)_{p} W_p + Q_{p+1} (\Delta_+ \W)_{p}.
\end{equation}
Finally, for any $p=1,...,N-1$, it holds:
\begin{multline}
(-\Delta_+ \circ \Delta_- \Q\W)_p = (-\Delta_+ \circ \Delta_- \Q)_p W_p + \Q_p (-\Delta_+ \circ \Delta_- \W)_p \\ + (-\Delta_+ \Q)_p (-\Delta_+ \W)_p + (\Delta_- \Q)_p (\Delta_- \W)_p.
\end{multline}
\end{lemma}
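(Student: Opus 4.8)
The three identities are direct computations from the definitions of $\Delta_-$ and $\Delta_+$, where $\Q\W$ denotes the componentwise product $(Q_pW_p)_p$. The plan is to establish the two first-order rules by a short hand computation and then obtain the second-order rule either by a direct expansion or by composing the first two.

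For the first identity, fix $p \in \{1,\dots,N\}$ and expand the left-hand side as the telescoping quotient $(\Delta_-(\Q\W))_p = (Q_pW_p - Q_{p-1}W_{p-1})/h$. Adding and subtracting the mixed term $Q_{p-1}W_p$ in the numerator splits this into $(\Delta_-\Q)_p W_p + Q_{p-1}(\Delta_-\W)_p$, which is exactly the claimed backward rule. The forward rule follows identically for $p \in \{0,\dots,N-1\}$, writing $(\Delta_+(\Q\W))_p = (Q_pW_p - Q_{p+1}W_{p+1})/h$ and inserting $Q_{p+1}W_p$; the only difference is that the shifted factor is now $Q_{p+1}$ rather than $Q_{p-1}$, reflecting which neighbour the one-sided difference reaches.

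For the second-order identity, the most transparent route is to compute both sides explicitly for $p \in \{1,\dots,N-1\}$. The left-hand side is the standard centered second difference $(-\Delta_+ \circ \Delta_- (\Q\W))_p = (Q_{p+1}W_{p+1} - 2Q_pW_p + Q_{p-1}W_{p-1})/h^2$. On the right-hand side, expanding the two second-difference terms and the two products of first differences and collecting monomials shows that all four mixed terms $Q_{p+1}W_p$, $Q_{p-1}W_p$, $Q_pW_{p+1}$, $Q_pW_{p-1}$ cancel, leaving precisely $(Q_{p+1}W_{p+1} - 2Q_pW_p + Q_{p-1}W_{p-1})/h^2$. Alternatively, one can apply $-\Delta_+$ to the backward rule just established and then use the forward rule on each of the two resulting products; the symmetric pair of mixed terms $(-\Delta_+\Q)_p(-\Delta_+\W)_p$ and $(\Delta_-\Q)_p(\Delta_-\W)_p$ then arises from the two cross contributions, consistent with the commutativity of $\Delta_-$ and $-\Delta_+$ noted earlier.

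The only point requiring care — and the nearest thing to an obstacle — is the index bookkeeping in the second-order rule: when composing the one-sided rules one must track the shift introduced by $\Delta_-$ before applying $-\Delta_+$, so that the shifted factors land on the correct indices and the mixed terms pair up correctly. Beyond this, the computation is entirely elementary.
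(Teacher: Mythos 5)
Your proof is correct: both first-order rules follow from the add-and-subtract trick exactly as you describe, and in the second-order identity the four mixed terms $Q_{p+1}W_p$, $Q_{p-1}W_p$, $Q_pW_{p+1}$, $Q_pW_{p-1}$ do cancel, while your alternative route via composition also closes once one notes the index identity $(\Delta_- \Q)_{p+1} = (-\Delta_+ \Q)_p$, which is precisely the bookkeeping point you flag. The paper states this lemma without proof, as a reminder of classical discrete formulas, so your direct verification is exactly the intended elementary argument and there is nothing to compare beyond that.
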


For any $\T \in \TT$, let us denote by $\R^{N+1}_{0,0} := \{ \W \in \RN, \; W_0 = W_1 = W_{N-1} = W_N = 0 \}$. 

\begin{lemma}[Discrete integrations by part]\label{dibp}
Let $\T \in \TT$ and $(\Q, \W) \in \R^{N+1} \times \R^{N+1}_{0,0}$. It holds:
\begin{equation}
\di \sum_{p=1}^{N-1} Q_p (\Delta_- \W)_p =  \di \sum_{p=2}^{N-2} (\Delta_+ \Q)_p W_p , \quad \text{and} \quad \di \sum_{p=1}^{N-1} Q_p (\Delta_+ \W)_p =  \di \sum_{p=2}^{N-2} (\Delta_- \Q)_p W_p.
\end{equation}
Finally, it holds:
\begin{equation}
\di \sum_{p=1}^{N-1} Q_p (-\Delta_+ \circ \Delta_- \W)_p =  \di \sum_{p=2}^{N-2} (-\Delta_+ \circ \Delta_- \Q)_p W_p.
\end{equation}
\end{lemma}

We refer to \cite[p.42]{milne} for a formal relation between the classical continuous integration by part and the previous discrete ones (called {\it summations by part in \cite{milne}}). As we always construct discrete analogous of continuous notions, we prefer to keep the terminology of \textit{discrete integration by part} instead of \textit{summation by part} which does not refer to a discrete quadrature formula for the approximation of integrals.

\subsection{Second order finite differences equations}
In the whole paper, we assume that all the elements denoted by $\OO$, $\PP$, $L$, $L_-$ and $L_+$ are sufficiently smooth in order to justify all the computations. \\ 

In the continuous case, a second order differential equation on an interval $[a,b]$ is defined by $O^{a,b}(q) = 0$ where $O$ is a second order differential operator, \textit{i.e.}:
\begin{equation}
\fonctionsansens{O}{a<b}{\fonction{O^{a,b}}{\CC^2([a,b],\R)}{\CC^0([a,b],\R)}{q}{ O^{a,b}(q) }}
\end{equation}
with:
\begin{equation}
\fonction{O^{a,b}(q)}{[a,b]}{\R}{t}{O^{a,b}(q)(t) = \OO \big( q(t),\dot{q}(t), \ddot{q}(t),t \big)}
\end{equation}
where $\dot{q}$ (resp. $\ddot{q}$) is the first (resp. second) derivative of $q$ and where:
\begin{equation}\label{pbarcont}
\fonction{\OO}{\R^4}{\R}{(x,v,w,t)}{\OO(x,v,w,t).}
\end{equation}
Hence, a second order differential equation (independently of the interval $[a,b]$) is entirely determined by the application $\OO$. A discrete analogue of this definition reads as follows :

\begin{definition}\label{defSOFDE}
A second order finite differences equation, associated to a partition $\T \in \TT$, is defined by $P^{\T} (\Q) = 0$ where $P$ is a second order finite differences operator, \textit{i.e.}:
\begin{equation}
\fonctionsansens{P}{\T \in \TT}{\fonction{P^{\T}}{\RN}{\Rn}{\Q}{ P^{\T}(\Q)  = \big( P^{\T}_p(\Q) \big)_{p=1,...,N-1} }}
\end{equation}
where
\begin{equation}
\forall p=1,...,N-1 , \; P^{\T}_p(\Q) = \PP \big( Q_p, (\Delta_- \Q)_p,(-\Delta_+ \Q)_p ,(-\Delta_+ \circ \Delta_- \Q)_p ,t_p,h \big)
\end{equation}
and where
\begin{equation}\label{pbardisc}
\fonction{\PP}{\R^5 \times \R^+_*}{\R}{(x,v_-,v_+,w,t,\xi)}{\PP(x,v_-,v_+,w,t,\xi).}
\end{equation}
A second order finite differences equation (independently of the partition $\T \in \TT$) is then entirely determined by the application $\PP$.
\end{definition}

Let us consider a second order differential equation $O^{a,b}(q) = 0$ on an interval $[a,b]$. A usual algebraic way in order to provide a discretization of this equation is to consider a partition $\T \in \TT_{a,b}$ and to define :
\begin{equation}
\fonction{\PP}{\R^5 \times \R^+_*}{\R}{(x,v_-,v_+,w,t,\xi)}{\OO \big( x,(1-\lambda) v_- +\lambda v_+,w,t \big)}
\end{equation}
with $\lambda \in [0,1]$. We then obtain the numerical scheme $P^{\T} (\Q) = 0$. The parameter $\lambda$ allows to choose for example the backward ($\lambda =0$), centered ($\lambda =1/2$) or forward ($\lambda =1$) approximation of the derivative $d/dt$. Such a discretization of $O^{a,b}(q) = 0$ is called a \textit{direct discretization}.

\begin{example}
Let us consider the Newton's equation with friction $q+\dot{q}+\ddot{q} = 0$ defined on a real interval $[a,b]$. It is a second order differential equation associated to $\OO (x,v,w,t) = x+v+w$. Hence, considering $\lambda =1/2$ and a partition $\T \in \TT_{a,b}$, we obtain by direct discretization the following numerical scheme:
\begin{equation}
\forall p=1,...,N-1, \; Q_p+ \dfrac{Q_{p+1}-Q_{p-1}}{2h}+\dfrac{Q_{p+1}-2Q_p + Q_{p-1}}{h^2}  = 0.
\end{equation}
\end{example}

\section{Formulation of the discrete version of the Helmholtz's problem for second order finite differences equations}\label{section3}
\subsection{Reminder about the classical Helmholtz's result for second order differential equations} A continuous Lagrangian system derives from a variational principle. Precisely, let us consider two reals $a<b$ and the following Lagrangian functional:
\begin{equation}
\fonction{\LL^{a,b}}{\CC^2([a,b],\R)}{\R}{q}{\di \int_a^b L(q,\dot{q},t) \; dt,}
\end{equation}
where $L$ is a Lagrangian, \textit{i.e.} an application of the type :
\begin{equation}
\fonction{L}{\R^3}{\R}{(x,v,t)}{L(x,v,t).}
\end{equation}
Let $\CC^2_0 ([a,b],\R) := \{ w \in \CC^2([a,b],\R), \; w(a) = w(b) = 0\}$ denote the set of \textit{variations}. Then, $q \in \CC^2([a,b],\R)$ is said to be a \textit{critical point} of $\LL^{a,b}$ if for any variation $w$, $D\LL^{a,b} (q)(w)=0$. The calculus of variations allows to characterize the critical points of $\LL^{a,b}$ as the solutions on $[a,b]$ of the following \textit{second order Euler-Lagrange equation}:
\begin{equation}\label{el}\tag{EL${}^{a,b}$}
\dfrac{\partial L}{\partial x} (q,\dot{q},t) - \dfrac{d}{dt} \left( \dfrac{\partial L}{\partial v} (q,\dot{q},t) \right) = 0.
\end{equation}
A dynamical system governed by such an Euler-Lagrange equation is called a \textit{second order Lagrangian system}. We refer to \cite[p.55-57]{arno} for more details concerning continuous Lagrangian systems. \\

The classical Helmholtz's result can be stated as follows : 

\begin{theorem}[Helmholtz's condition]
Let $O$ be a second order differential operator. The second order differential equation associated with $O$ can be written as a second order Euler-Lagrange equation if and only if all the Frechet derivatives of $O^{a,b}$ are self-adjoint for any real $a<b$.
\end{theorem}

We refer to \cite[Theorem 5.92, p.364-365]{olve} for a detailed proof. As remarked by J-P. Olver \cite[p.365]{olve}, the condition of self-adjointness of all the Frechet derivatives of $O^{a,b}$ for any real $a<b$ are often referred as the {\it Helmholtz's condition}. \\

Nevertheless, the Helmholtz's condition can be more explicitly formulated : 

\begin{lemma}
\label{helmholtz-continuous}
Let $O$ be a second order differential operator. The operator $O$ satisfies the Helmholtz's condition if and only if
\begin{equation}\label{helmcondcont}\tag{H$_{\text{cont}}$}
\forall a<b, \; \forall q \in \CC^2 ([a,b],\R), \; \dfrac{d}{dt} \left( \dfrac{\partial \OO}{\partial w} (q,\dot{q},\ddot{q},t) \right) = \dfrac{\partial \OO}{\partial v} (q,\dot{q},\ddot{q},t).
\end{equation}
\end{lemma}

\begin{proof}
Let $a<b$ and $q \in \CC^2 ([a,b],\R)$. One can easily obtain that:
\begin{equation}
\forall u \in \CC^2 ([a,b],\R), \; DO^{a,b}(q)(u) = \dfrac{\partial \OO}{\partial x} (q,\dot{q},\ddot{q},t) u + \dfrac{\partial \OO}{\partial v} (q,\dot{q},\ddot{q},t) \dot{u} + \dfrac{\partial \OO}{\partial w} (q,\dot{q},\ddot{q},t) \ddot{u}.
\end{equation}
Using integrations by part, it holds for any $u \in \CC^2 ([a,b],\R)$:
\begin{equation}
DO^{a,b}(q)^*(u) = \dfrac{\partial \OO}{\partial x} (q,\dot{q},\ddot{q},t) u - \dfrac{d}{dt} \left[ \dfrac{\partial \OO}{\partial v} (q,\dot{q},\ddot{q},t) u \right] + \dfrac{d^2}{dt^2} \left[ \dfrac{\partial \OO}{\partial w} (q,\dot{q},\ddot{q},t) u \right].
\end{equation}
Finally, we obtain that $DO^{a,b}(q) = DO^{a,b}(q)^*$ if and only if:
\begin{equation}
\dfrac{d}{dt} \left( \dfrac{\partial \OO}{\partial w} (q,\dot{q},\ddot{q},t) \right) = \dfrac{\partial \OO}{\partial v} (q,\dot{q},\ddot{q},t).
\end{equation}
This concludes the proof.
\end{proof}

\subsection{Second order discrete Euler-Lagrange equations} 
We give the discrete analogous definitions and results of the previous Section :

\begin{definition}\label{defdisclag}
A discrete Lagrangian functional, associated to a partition $\T \in \TT$, is defined by:
\begin{equation}
\fonction{\LL^{\T}}{\RN}{\R}{\Q}{ h \di \sum_{p=1}^N L_- \big( Q_p, (\Delta_- \Q)_p,t_p , h \big) + h \di \sum_{p=0}^{N-1} L_+ \big( Q_p, (-\Delta_+ \Q)_p,t_p , h \big),} 
\end{equation}
where $(L_-,L_+)$ is a couple of Lagrangians, \textit{i.e.} $ L_{\pm}$ are applications of the type:
\begin{equation}
\fonction{L_{\pm}}{\R^3 \times \R^+_*}{\R}{(x,v,t,\xi)}{L_{\pm} (x,v,t,\xi).}
\end{equation}
Let $\RN_0 := \{ \W \in \RN, \; W_0 = W_N = 0\}$ denote the set of \textit{discrete variations}. Then, $\Q \in \RN$ is said to be a \textit{discrete critical point} of $\LL^{\T}$ if for any discrete variation $\W$, $D\LL^{\T} (\Q)(\W)=0$.
\end{definition}

The discrete critical points of $\LL^{\T}$ are then characterized by :

\begin{theorem}\label{thmdisclag}
Let $(L_-,L_+)$ be a couple of Lagrangians and $\T \in \TT$. Let $\LL^{\T}$ be the associated discrete Lagrangian functional. Then, $\Q \in \RN$ is a discrete critical point of $\LL^{\T}$ if and only if $\Q$ is a solution of the second order discrete Euler-Lagrange equation given by :
\begin{multline}\label{elh}\tag{EL${}^{\T}$}
\di \frac{\partial L_-}{\partial x} ( \Q,\Delta_- \Q,\T,h )+\di \frac{\partial L_+}{\partial x} ( \Q,-\Delta_+ \Q,\T,h ) \\ + \Delta_+ \left( \frac{\partial L_-}{\partial v} ( \Q,\Delta_- \Q,\T,h) \right) - \Delta_- \left( \frac{\partial L_+}{\partial v} ( \Q,-\Delta_+ \Q,\T,h) \right)  = 0. 
\end{multline}
A discrete dynamical system governed equation (\ref{elh}) is called a \textit{second order discrete Lagrangian system}.
\end{theorem}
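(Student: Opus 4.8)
The plan is to compute the differential $D\LL^{\T}(\Q)(\W)$ explicitly for an arbitrary discrete variation $\W \in \RN_0$, to rewrite it as a single sum of the form $h\sum_{p=1}^{N-1} E_p(\Q)\, W_p$, and then to invoke the fundamental lemma of the discrete calculus of variations in order to conclude that the vanishing of this sum for every $\W$ is equivalent to $E_p(\Q) = 0$ for all $p=1,\dots,N-1$, which is precisely equation \eqref{elh}.

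First I would differentiate $\LL^{\T}$ at $\Q$ in the direction $\W$; the smoothness assumptions on $L_-,L_+$ legitimate this. Since $\Delta_-$ and $\Delta_+$ are linear operators, the directional derivatives of $\Q\mapsto\Delta_-\Q$ and $\Q\mapsto-\Delta_+\Q$ are $\Delta_- \W$ and $-\Delta_+ \W$, so the chain rule produces four families of terms: from the $L_-$ sum, a term $\frac{\partial L_-}{\partial x}(\cdots)_p\, W_p$ and a term $\frac{\partial L_-}{\partial v}(\cdots)_p\,(\Delta_- \W)_p$ summed over $p=1,\dots,N$; and, symmetrically, from the $L_+$ sum, a term $\frac{\partial L_+}{\partial x}(\cdots)_p\, W_p$ and a term $\frac{\partial L_+}{\partial v}(\cdots)_p\,(-\Delta_+ \W)_p$ summed over $p=0,\dots,N-1$.

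The central computation is a discrete summation by parts (Abel transformation). Writing $(\Delta_- \W)_p = (W_p - W_{p-1})/h$ and shifting the index, one obtains
\begin{equation*}
h \sum_{p=1}^{N} f_p (\Delta_- \W)_p = f_N W_N - f_1 W_0 + h \sum_{p=1}^{N-1} (\Delta_+ f)_p\, W_p,
\end{equation*}
and analogously, from $(-\Delta_+ \W)_p = (W_{p+1}-W_p)/h$,
\begin{equation*}
h \sum_{p=0}^{N-1} g_p (-\Delta_+ \W)_p = g_{N-1} W_N - g_0 W_0 - h \sum_{p=1}^{N-1} (\Delta_- g)_p\, W_p,
\end{equation*}
where $f_p$ and $g_p$ denote the components of $\frac{\partial L_-}{\partial v}(\Q,\Delta_-\Q,\T,h)$ and $\frac{\partial L_+}{\partial v}(\Q,-\Delta_+\Q,\T,h)$. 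Because $\W \in \RN_0$, the boundary terms in $W_0$ and $W_N$ all vanish; for the same reason the endpoint contributions of the two $\partial/\partial x$ sums (the index $p=N$ for $L_-$ and $p=0$ for $L_+$) drop out, so that every remaining sum may be taken over the interior indices $p=1,\dots,N-1$. Collecting the four families then gives exactly
\begin{equation*}
D\LL^{\T}(\Q)(\W) = h \sum_{p=1}^{N-1} E_p(\Q)\, W_p,
\end{equation*}
where $E_p(\Q)$ denotes the $p$-th component of the left-hand side of \eqref{elh}.

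Finally I would apply the fundamental lemma of the discrete calculus of variations, which here is elementary because we work in finite dimension. As $\W$ ranges over $\RN_0$, the interior values $(W_1,\dots,W_{N-1})$ range freely over $\R^{N-1}$; choosing $\W$ to be each canonical basis vector supported at a single interior index shows that $D\LL^{\T}(\Q)(\W)=0$ for all $\W$ if and only if every coefficient $E_p(\Q)$ vanishes, which is \eqref{elh}. I expect the only delicate point to be the index bookkeeping in the summation by parts: the two Lagrangians $L_-$ and $L_+$ are summed over the shifted ranges $\{1,\dots,N\}$ and $\{0,\dots,N-1\}$, and one must verify that the boundary terms produced by the Abel transformation are precisely those annihilated by the constraint $W_0 = W_N = 0$, so that the two difference operators reassemble correctly into the forward and backward difference terms of \eqref{elh}.
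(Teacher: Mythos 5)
Your proof is correct and takes essentially the same route as the paper: a chain-rule computation of $D\LL^{\T}(\Q)(\W)$, discrete summation by parts (the paper's Equation \eqref{eqdibp1}, which you rederive with the boundary terms written out and annihilated by $W_0=W_N=0$), and the conclusion from the arbitrariness of the interior values of $\W$. The only difference is presentational, in that you make explicit the boundary bookkeeping and the fundamental-lemma step (canonical basis vectors) that the paper leaves implicit.
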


\begin{proof}
Let $\Q \in \RN$ and $\W \in \RN_0$. We have:
\begin{multline}\label{eq887}
D\LL^{\T} (\Q)(\W) = h \di \sum_{p=1}^N  \left[ \frac{\partial L_-}{\partial x} (\ast_p) W_p + \frac{\partial L_-}{\partial v} (\ast_p) (\Delta_- \W)_p \right] \\ + h \di \sum_{p=0}^{N-1} \left[  \frac{\partial L_+}{\partial x} (\ast \ast_p) W_p + \frac{\partial L_+}{\partial v} (\ast \ast_p) (-\Delta_+ \W)_p\right]  
\end{multline}
where $ \ast := (\Q,\Delta_- \Q,\T,h) $ and $\ast \ast := (\Q,-\Delta_+ \Q,\T,h) $. Let us remind the following \textit{discrete integrations by part}. For any $(\boldsymbol{F}, \boldsymbol{G}) \in \RN \times \RN_0 $, it holds:
\begin{equation}\label{eqdibp1}
\di \sum_{p=1}^{N} F_p (\Delta_- \boldsymbol{G})_p = \di \sum_{p=1}^{N-1} (\Delta_+ \boldsymbol{F})_p G_p \quad \text{and} \quad \di \sum_{p=0}^{N-1} F_p (\Delta_+ \boldsymbol{G})_p = \di \sum_{p=1}^{N-1} (\Delta_- \boldsymbol{F})_p G_p.
\end{equation}
Finally, combining \eqref{eq887} and \eqref{eqdibp1}, we obtain:
\begin{equation}
D\LL^{\T} (\Q)(\W) = h \di \sum_{p=1}^{N-1} \left[ \frac{\partial L_-}{\partial x} (\ast_p) + \frac{\partial L_+}{\partial x} (\ast \ast_p) + \Delta_+ \left( \frac{\partial L_-}{\partial v} (\ast) \right)_p - \Delta_- \left( \frac{\partial L_+}{\partial v} (\ast \ast) \right)_p \right] W_p,
\end{equation}
which concludes the proof.
\end{proof}

Let us consider an Euler-Lagrange equation $\eqref{el}$ defined on a real interval $[a,b]$ and $L$ the associated Lagrangian. Let us take for example $L_- (x,v,t, \xi) = L(x,v,t)$ and $L_+ = 0$. Considering a partition $\T \in \TT_{a,b}$, we obtain that $\LL^{\T}$ is a discrete version of $\LL^{a,b}$ and \eqref{elh} is obtained by a discrete variational principle on $\LL^{\T}$. The numerical scheme \eqref{elh} is then a so-called \textit{variational integrator} : it is a numerical scheme for \eqref{el} having the particularity of preserving its intrinsic Lagrangian structure at the discrete level. We refer to \cite{lubi,mars} for more details concerning variational integrators. We note that one can also use a centered version by taking $L_- (x,v,t, \xi) = L_+ (x,v,t, \xi) = L(x,v,t)/2$. 

\begin{example}\label{exnewtonsansfriction}
Let us consider the Newton's equation without friction $q+\ddot{q} = 0$ defined on a real interval $[a,b]$. It is a second order differential equation associated to $\OO (x,v,w,t) = x+w$ satisfying the continuous Helmholtz's condition \eqref{helmcondcont}. It corresponds to \eqref{el} with the quadratic Lagrangian $ L(x,v,t) = (x^2 - v^2)/2$. Considering a partition $\T \in \TT_{a,b}$, taking $L_- (x,v,t, \xi) = L(x,v,t)$ and $L_+ = 0$, we obtain the following discrete Euler-Lagrange equation:
\begin{equation}\label{eqnewtonsansfriction}
\forall p=1,...,N-1, \;   Q_p + \dfrac{Q_{p+1}- 2 Q_p + Q_{p-1}}{h^2} = 0.
\end{equation}
Let us note that \eqref{eqnewtonsansfriction} coincides with a direct discretization. Nevertheless, a direct discretization of an Euler-Lagrange equation do not lead necessary to a discrete Euler-Lagrange equation, see Example \ref{exmarchepas}. In this case, we say that the Lagrangian structure is not preserved.
\end{example}

\subsection{Formulation of the discrete Helmholtz's problem for second order finite differences equations}

We first remark that a second order discrete Euler-Lagrange equation is a second order finite differences equation (in the sense of Definition \ref{defSOFDE}):

\begin{proposition}\label{propelhSOFDE}
Let $(L_-,L_+)$ be a couple of Lagrangians. Then, the discrete Euler-Lagrange equation associated to $(L_-,L_+)$ is a second order finite differences equation defined by :
\begin{equation}\label{eqpropelhSOFDE1}
\fonction{\PP}{\R^5 \times \R^+_*}{\R}{(x,v_-,v_+,w,t,\xi)}{\di \sum_{i+j+k \geq 1} \Big[ A_{i,j,k} (x,v_-,t,\xi) v^i_+ + B_{i,j,k} (x,v_+,t,\xi) v^i_- \Big]w^j}
\end{equation}
where
\begin{equation}\label{eqpropelhSOFDE2}
A_{i,j,k}(x,v,t,\xi)=\delta_{(i,j,k)=(0,0,1)} \dfrac{\partial L_-}{\partial x}(x,v,t,\xi)- \dfrac{\xi^{i+j+k-1}}{i!j!k!}\dfrac{\partial^{i+j+k+1} L_-}{\partial x^i \partial v^{j+1} \partial t^k}(x,v,t,\xi) ,
\end{equation}
\begin{equation}\label{eqpropelhSOFDE3}
B_{i,j,k}(x,v,t,\xi)=\delta_{(i,j,k)=(0,0,1)} \dfrac{\partial L_+}{\partial x}(x,v,t,\xi)+ \dfrac{(-\xi)^{i+j+k-1}}{i!j!k!}\dfrac{\partial^{i+j+k+1} L_+}{\partial x^i \partial v^{j+1} \partial t^k}(x,v,t,\xi),
\end{equation}
and $\delta$ is the Kronecker symbol.
\end{proposition}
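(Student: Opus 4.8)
The plan is to fix an interior index $p \in \{1,\dots,N-1\}$, to evaluate the left-hand side of \eqref{elh} at $p$, and to re-express it through the six quantities $x=Q_p$, $v_-=(\Delta_-\Q)_p$, $v_+=(-\Delta_+\Q)_p$, $w=(-\Delta_+\circ\Delta_-\Q)_p$, $t=t_p$ and $\xi=h$, which are precisely the arguments of $\PP$ in Definition~\ref{defSOFDE}. The first step is to record the ``locality'' identities that eliminate the neighbouring nodes: from the definitions of $\Delta_\pm$ one has $Q_{p+1}=Q_p+hv_+$, $Q_{p-1}=Q_p-hv_-$, $t_{p\pm1}=t_p\pm h$, and, crucially, $(\Delta_-\Q)_{p+1}=v_+$, $(-\Delta_+\Q)_{p-1}=v_-$ together with $v_+-v_-=hw$. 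Thus every quantity occurring at the indices $p\pm1$ is a function of $(x,v_-,v_+,w,t,\xi)$ alone, which is exactly what makes the equation genuinely ``second order'' in the sense of Definition~\ref{defSOFDE}.

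Next I would split the left-hand side of \eqref{elh} into an $L_-$-part, $\frac{\partial L_-}{\partial x}(\ast_p)+\Delta_+\big(\frac{\partial L_-}{\partial v}(\ast)\big)_p$, and an $L_+$-part, $\frac{\partial L_+}{\partial x}(\ast\ast_p)-\Delta_-\big(\frac{\partial L_+}{\partial v}(\ast\ast)\big)_p$, with $\ast=(\Q,\Delta_-\Q,\T,h)$ and $\ast\ast=(\Q,-\Delta_+\Q,\T,h)$ as in the proof of Theorem~\ref{thmdisclag}. The two $\partial_x$-terms are already local and give $\frac{\partial L_-}{\partial x}(x,v_-,t,\xi)$ and $\frac{\partial L_+}{\partial x}(x,v_+,t,\xi)$. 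For the two difference terms I would expand the difference quotients, e.g. $\Delta_+\big(\frac{\partial L_-}{\partial v}(\ast)\big)_p=\frac1h\big[\frac{\partial L_-}{\partial v}(Q_p,v_-,t_p,h)-\frac{\partial L_-}{\partial v}(Q_p+hv_+,v_+,t_p+h,h)\big]$ and the analogous backward expression for $L_+$, and then apply Taylor's formula (valid under the standing regularity assumption) to the shifted evaluation as a multivariate series in its four arguments about the base point at index $p$. Using the locality identities, the increments are $(hv_+,hw,h,0)$ for $L_-$ and $(-hv_-,-hw,-h,0)$ for $L_+$.

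The coefficients then fall out by bookkeeping. In the $L_-$ branch the monomial $(hv_+)^i(hw)^j h^k$ weighted by $1/(i!j!k!)$ yields $\frac{h^{i+j+k}}{i!j!k!}v_+^i w^j$ times $\partial_x^i\partial_v^{j+1}\partial_t^k L_-$ (the $v$-derivatives accumulating to order $j+1$ because one derivative is already carried by $\partial L_-/\partial v$); the $(0,0,0)$ term cancels the undifferentiated value, and the surviving $1/h$ from the difference quotient lowers $h^{i+j+k}$ to $h^{i+j+k-1}$, reproducing the second term of \eqref{eqpropelhSOFDE2}. The isolated term $\frac{\partial L_-}{\partial x}$ is then absorbed as the single contribution at $(i,j,k)=(0,0,1)$, where $v_+^i w^j=1$, which is exactly what the Kronecker symbol in \eqref{eqpropelhSOFDE2} encodes; the identical scheme for $L_+$ produces \eqref{eqpropelhSOFDE3}. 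The main obstacle is the sign-and-power bookkeeping in the backward ($L_+$) branch: each of the three nonzero increments carries a factor $-h$, so the monomial is weighted by $(-h)^{i+j+k}$, and division by $h$ converts this into the power $(-\xi)^{i+j+k-1}$ of \eqref{eqpropelhSOFDE3} only once the signs of the $-\Delta_-$ prefactor, of the three backward shifts and of the difference quotient are reconciled; a single misplaced sign there propagates through every coefficient, so this step requires the most care.
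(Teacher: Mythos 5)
Your strategy is exactly the paper's (the published proof is the one-line instruction to take a partition and develop $\partial L_-/\partial v$ and $\partial L_+/\partial v$ in power series), and your scaffolding is sound: the locality identities $Q_{p+1}=Q_p+hv_+$, $Q_{p-1}=Q_p-hv_-$, $(\Delta_-\Q)_{p+1}=v_+$, $(-\Delta_+\Q)_{p-1}=v_-$, $v_+-v_-=hw$ are correct, the Taylor increments $(hv_+,hw,h,0)$ and $(-hv_-,-hw,-h,0)$ are correct, and your $L_-$ bookkeeping (cancellation of the $(0,0,0)$ term, absorption of $\partial L_-/\partial x$ at $(i,j,k)=(0,0,1)$ via the Kronecker symbol) genuinely reproduces \eqref{eqpropelhSOFDE2}.

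The problem is the one step you explicitly deferred: the sign ``reconciliation'' in the $L_+$ branch does not come out as you assert. Writing $H_q:=\frac{\partial L_+}{\partial v}\big(Q_q,(-\Delta_+\Q)_q,t_q,h\big)$, the term in \eqref{elh} is $-\Delta_-\big(\frac{\partial L_+}{\partial v}(\ast\ast)\big)_p=\frac{1}{h}\big(H_{p-1}-H_p\big)$, and expanding $H_{p-1}$ about the base point $(Q_p,v_+,t_p,h)$ with increments $(-hv_-,-hw,-h,0)$ gives the coefficient $\frac{(-h)^{i+j+k}}{h}=(-1)^{i+j+k}h^{i+j+k-1}=-(-\xi)^{i+j+k-1}$ for every value of $i+j+k\geq 1$, so the non-Kronecker part of $B_{i,j,k}$ is $-\dfrac{(-\xi)^{i+j+k-1}}{i!j!k!}\dfrac{\partial^{i+j+k+1}L_+}{\partial x^i\partial v^{j+1}\partial t^k}$, the \emph{opposite} sign of the printed \eqref{eqpropelhSOFDE3}. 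A direct check confirms this: for $L_-=0$ and $L_+(x,v,t,\xi)=v^2/2$ the Euler--Lagrange operator is $-\Delta_-\big((-\Delta_+\Q)\big)_p=-(v_+-v_-)/h=-w$, whereas \eqref{eqpropelhSOFDE3} as printed gives $B_{0,1,0}=+1$ and hence $\PP=+w$; note also that the explicit formulas for $\partial\PP/\partial w$ and $\partial\PP/\partial v_\pm$ used in Section \ref{section6} are consistent with the corrected sign, not with the printed one, so the statement itself carries a sign typo at precisely this spot. Your claim that ``division by $h$ converts $(-h)^{i+j+k}$ into the power $(-\xi)^{i+j+k-1}$'' is false as arithmetic ($(-h)^{i+j+k}/h=-(-h)^{i+j+k-1}$), and since you flagged this as the step requiring the most care but never executed it, your proposal as written endorses a formula the computation contradicts; a complete proof must carry the bookkeeping through and either correct the sign in $B_{i,j,k}$ or explain a cancellation that, in fact, does not exist.
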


\begin{proof}
We have just to take a partition $\T \in \TT$ and to develop $\partial L_- / \partial v $ and $\partial L_+ / \partial v $ in power series.
\end{proof}

We finally formulate the following discrete version of the Helmholtz's problem : \\

\textit{\textbf{Discrete Helmholtz's problem for second order finite differences equations:} find a necessary and sufficient condition under which a second order finite differences equation can be written as a second order discrete Euler-Lagrange equation. Precisely, let $P$ be a second order finite differences operator. Our aim is to find a necessary and sufficient condition on $P$ under which there exists a couple of Lagrangian $(L_-,L_+)$ such that for any $\T \in \TT$ and any $\Q \in \RN$, we have:}
\begin{multline}
P^{\T}(\Q) = \di \frac{\partial L_-}{\partial x} ( \Q,\Delta_- \Q,\T,h )+\di \frac{\partial L_+}{\partial x} ( \Q,-\Delta_+ \Q,\T,h ) \\ + \Delta_+ \left( \frac{\partial L_-}{\partial v} ( \Q,\Delta_- \Q,\T,h) \right) - \Delta_- \left( \frac{\partial L_+}{\partial v} ( \Q,-\Delta_+ \Q,\T,h) \right).
\end{multline}

\section{Solution of the discrete Helmholtz's problem for second order finite differences equations}\label{section4}

\begin{theorem}\label{thmhelmdisc}
Let $P$ be a second order finite differences operator. Its associated second order finite differences equation can be written as a second order discrete Euler-Lagrange equation if and only if $P$ satisfies the following discrete Helmholtz's condition:
\begin{equation}\label{helmconddisc}\tag{H$_{\text{disc}}$}
\forall \T \in \TT, \; \forall \Q \in \RN, \; \forall p=2,...,N-1, \; \Delta_- \left( \dfrac{\partial \PP}{\partial w}(\star) \right)_p = \dfrac{\partial \PP}{\partial v_-}(\star_p) + \dfrac{\partial \PP}{\partial v_+}(\star_{p-1}),
\end{equation}
where $\star := \big( \Q, \Delta_- \Q, (-\Delta_+ \Q), (-\Delta_- \circ \Delta_- \Q),\T,h \big) $.
\end{theorem}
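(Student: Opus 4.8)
The plan is to recast the discrete Helmholtz condition \eqref{helmconddisc} as a transparent statement about a \emph{single} function of five variables, and then to reconstruct a Lagrangian from it by an ordinary line integral. First I would fix an interior index $p$ with $2\le p\le N-1$ (the hypothesis $N\ge 4$ guarantees such indices) and set $a:=(\Delta_-\Q)_p$, $b:=(-\Delta_+\Q)_p$ and $c:=(\Delta_-\Q)_{p-1}$. The key elementary fact is that, by choosing $Q_{p-2},Q_{p-1},Q_p,Q_{p+1}$ and the partition, the data $(Q_p,a,b,c,t_p,h)$ may be prescribed \emph{independently}; hence \eqref{helmconddisc} at $p$ is an identity between functions of these free variables. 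Using $(-\Delta_+\Q)_{p-1}=(\Delta_-\Q)_p=a$, together with $Q_{p-1}=Q_p-ha$ and $t_{p-1}=t_p-h$, both $\star_p$ and $\star_{p-1}$ become explicit in these variables (the velocity slots of $\star_p$ being $(a,b)$ and those of $\star_{p-1}$ being $(c,a)$, with the acceleration slots taking their on-path second-difference values, so that $\PP(\star_p)=P^{\T}_p(\Q)$ as in Definition \ref{defSOFDE}).

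Next I would introduce $R(x,v_-,v_+,t,\xi):=\PP\big(x,v_-,v_+,(v_+-v_-)/\xi,t,\xi\big)$, the restriction of $\PP$ to the physical hypersurface $w=(v_+-v_-)/\xi$; note that $R$ is determined by the finite differences operator alone and not by the chosen symbol, so any statement phrased through $R$ is automatically well posed. Along this hypersurface the chain rule gives $\partial\PP/\partial v_-=\partial R/\partial v_-+\tfrac1\xi\,\partial\PP/\partial w$ and $\partial\PP/\partial v_+=\partial R/\partial v_+-\tfrac1\xi\,\partial\PP/\partial w$. Substituting these into \eqref{helmconddisc} and grouping the $\star_p$ and $\star_{p-1}$ terms, the two occurrences of $\partial\PP/\partial w$ cancel and the condition collapses to the clean identity
\[ -\frac{\partial R}{\partial v_-}(Q_p,a,b,t_p,h)=\frac{\partial R}{\partial v_+}(Q_p-ha,\,c,\,a,\,t_p-h,\,h). \]
Its left-hand side is independent of $c$ and its right-hand side of $b$, so both equal a common function $\Theta(Q_p,a,t_p,h)$. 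Independence of the left side from $b$ forces $\partial^2 R/\partial v_-\partial v_+\equiv 0$, whence $R$ splits as $R(x,v_-,v_+,t,\xi)=R_-(x,v_-,t,\xi)+R_+(x,v_+,t,\xi)$; equating the two expressions for $\Theta$ then yields the single scalar compatibility relation, which I call $(\ast)$:
\[ \frac{\partial R_-}{\partial v}(x,v,t,\xi)+\frac{\partial R_+}{\partial v}(x-\xi v,\,v,\,t-\xi,\,\xi)=0. \]
Thus \eqref{helmconddisc} is equivalent to: $R$ splits, \emph{and} $(\ast)$ holds.

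For necessity, I would assume $P^{\T}(\Q)=0$ is the Euler--Lagrange equation of some $(L_-,L_+)$. Expanding \eqref{elh} at an interior node and using the shift relations $Q_{p\pm1}=Q_p\pm h(\cdot)$, $t_{p\pm1}=t_p\pm h$ exhibits $R$ already in split form, with (all unshifted arguments equal to $(x,v,t,\xi)$)
\[ R_-=\partial_x L_-+\tfrac1\xi\partial_v L_-+\tfrac1\xi\,\partial_v L_+(x-\xi v,v,t-\xi,\xi),\qquad R_+=\partial_x L_+-\tfrac1\xi\partial_v L_+-\tfrac1\xi\,\partial_v L_-(x+\xi v,v,t+\xi,\xi), \]
and a one-line substitution verifies $(\ast)$ identically. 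By the reduction above, \eqref{helmconddisc} follows (equivalently, one checks the two sides of \eqref{helmconddisc} coincide termwise, the cancellations being produced by the same shift identities).

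For sufficiency I would assume \eqref{helmconddisc} and seek a solution with $L_+:=0$, so that only $L_-$ must be produced. The Euler--Lagrange symbol of $(L_-,0)$, restricted to the hypersurface, splits as $R^{\mathrm{EL}}_-=\partial_x L_-+\tfrac1\xi\partial_v L_-$ and $R^{\mathrm{EL}}_+=-\tfrac1\xi\,\partial_v L_-(x+\xi v,v,t+\xi,\xi)$. Matching $R^{\mathrm{EL}}_+=R_+$ forces $\partial_v L_-(x,v,t,\xi)=-\xi\,R_+(x-\xi v,v,t-\xi,\xi)$, and then matching $R^{\mathrm{EL}}_-=R_-$ forces $\partial_x L_-(x,v,t,\xi)=R_-(x,v,t,\xi)+R_+(x-\xi v,v,t-\xi,\xi)$. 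For fixed parameters $(t,\xi)$ these two equations prescribe the gradient of $L_-$ on the $(x,v)$-plane, and the closedness of this prescribed gradient — the equality of its mixed second derivatives — reduces, after one application of the chain rule, \emph{exactly} to $(\ast)$. Hence $L_-$ exists by integration over the simply connected $(x,v)$-plane (with $t,\xi$ as parameters), and by construction $(L_-,0)$ reproduces $R=R_-+R_+$, i.e. it reproduces $\PP$ along every sequence; so $P^{\T}(\Q)=0$ is its discrete Euler--Lagrange equation. The main obstacle of the whole argument is precisely this recognition — that the scalar relation $(\ast)$ distilled from \eqref{helmconddisc} is nothing but the integrability condition for $L_-$; once the independent-variable bookkeeping and the separation step are in place, the reconstruction is a routine potential computation, and producing the full class of Lagrangian formulations amounts to reinstating the freedom in the splitting of $R$ and in $L_+$.
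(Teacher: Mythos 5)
Your proof is correct, and it takes a genuinely different route from the paper's. The paper deliberately mirrors the continuous scheme: for the hard direction it passes through the self-adjointness of the Frechet derivatives $DP^{\T}(\Q)$ (Theorem \ref{thmcaractadj}) and the Volterra-type homotopy $L_1(x,v_-,v_+,w,t,\xi)=x\int_0^1\PP(\lambda x,\lambda v_-,\lambda v_+,\lambda w,t,\xi)\,d\lambda$, proves $D\LL^{\T}_1(\Q)(\W)=h\sum_{p=2}^{N-2}P^{\T}_p(\Q)W_p$, and then splits $L_1$ along paths using exactly the mixed-partial identity you derive: the paper's $\ell$ in Proposition \ref{propnececond} is your $R$, and its equation \eqref{eq888} is your $\partial^2 R/\partial v_-\partial v_+\equiv 0$, obtained there by differentiating \eqref{helmconddisc} with respect to $Q_{p-2}$ and $Q_{p+1}$ rather than by your $b$/$c$-independence observation. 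It then still needs the shift argument $\sigma$ to upgrade the identity from $p=2,\dots,N-2$ to $p=1,\dots,N-1$, since the variational identity only tests against $\RN_{0,0}$. You bypass all of this: the chain-rule cancellation of the $\partial\PP/\partial w$ terms shows \eqref{helmconddisc} is equivalent to a condition on the on-path restriction $R$ alone (a nice by-product: the condition does not depend on how $\PP$ is extended off the hypersurface $w=(v_+-v_-)/\xi$), and your scalar relation $(\ast)$ --- which the paper never isolates, it being absorbed into the homotopy computation via self-adjointness --- is recognized as precisely the closedness of the prescribed gradient of $L_-$, so $L_-$ is recovered by a direct potential integral with $L_+=0$, and since you match symbols as functions the conclusion holds at every node $p=1,\dots,N-1$ with no endpoint patching. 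What each buys: the paper's route runs parallel to the classical continuous proof (one of its stated aims, in contrast with \cite{albu,opri}) and produces a couple via the canonical homotopy; yours is more elementary and constructive, and shows in addition that every variational $P$ admits a formulation with $L_+=0$, the remaining freedom being the splitting ambiguity $R_-\mapsto R_-+\rho(x,t,\xi)$, $R_+\mapsto R_+-\rho(x,t,\xi)$ together with null couples as in Theorem \ref{thmnulldisc}. Two points worth making explicit in a polished write-up: the independent prescribability of $(Q_p,a,b,c,t_p,h)$ uses exactly the range $2\le p\le N-1$ with $N\ge 4$, matching the paper's quantifiers; and smoothness of $L_-$ in the parameters $(t,\xi)$ is best secured by the explicit radial potential formula rather than an abstract appeal to the Poincar\'e lemma.
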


\begin{proof} See Sections \ref{section5} and \ref{section6}. \end{proof}

Let us note the similarity between the classical Helmholtz's condition \eqref{helmcondcont} and its discrete version \eqref{helmconddisc}. It is also important to note that, similarly to the continuous case, the discrete Helmholtz's condition \eqref{helmconddisc} corresponds to the self-adjointness of all the Frechet derivatives of $P^{\T}$ for any $ \T \in \TT $, see Section \ref{section5}.

\begin{example}
Let us take the second order finite differences equation \eqref{eqnewtonsansfriction} obtained in Example \ref{exnewtonsansfriction}. It is associated to the application $\PP (x,v_-,v_+,w,t,\xi) = x+w $ which satisfies the discrete Helmholtz's condition \eqref{helmconddisc}. It is expected because \eqref{eqnewtonsansfriction} is a discrete Euler-Lagrange equation by construction.
\end{example}

\begin{example}\label{exmarchepas}
Let us consider the differential equation $q + \sin (\dot{q}) \ddot{q} =0$ defined on a real interval $[a,b]$. It is a second order differential equation associated to the application $ \OO (x,v,w,t) = x + \sin (v) w$ satisfying \eqref{helmcondcont}. It corresponds to \eqref{el} associated to the Lagrangian $L(x,v,t) = (x^2/2) + \cos (v) $. Let us consider $\T \in \TT_{a,b}$ and define $\PP (x,v_-,v_+,w,t, \xi) = \OO \big( x,(1-\lambda)v_- + \lambda v_+ ,w,t \big) $ with $\lambda \in [0,1]$. Then, we obtain by direct discretization the following second order finite differences equation:
\begin{equation}\label{eqexmarchepas}
\forall p=1,...,N-1, \; Q_p + \sin \left( \dfrac{\lambda Q_{p+1}+(1-2\lambda)Q_p + (\lambda-1) Q_{p-1}}{h}  \right) \dfrac{Q_{p+1}-2Q_{p}+Q_{p-1}}{h^2} = 0.
\end{equation}
Let $P$ be the second order finite differences operator associated. Then, $P$ does not satisfy \eqref{helmconddisc} and consequently \eqref{eqexmarchepas} can not be written as a second order discrete Euler-Lagrange equation. This is an example of direct discretization of an Euler-Lagrange equation not leading to a discrete Euler-Lagrange equation. The numerical scheme \eqref{eqexmarchepas} does not preserve the Lagrangian structure of the differential equation at the discrete level.
\end{example}

\section{Discrete Helmholtz's condition and self-adjointness of Frechet derivatives of second order finite differences operators}
\label{section5}

In this section, we prove that a second order differences operator $P$ satisfies the discrete Helmholtz's condition \eqref{helmconddisc} if and only if all the Frechet derivatives of $ P^{\T}$ are self-adjoint for any $\T \in \TT$.

\subsection{Interpretation of the discrete Helmholtz's condition as self-adjointness of the Frechet derivative of second order finite differences operators} 

Let us define the following discrete version of the self-adjointness of a Frechet derivative for a differential operator :

\begin{definition}\label{defselfadjoint}
Let $P$ be a second order finite differences operator, $ \T \in \TT$ and $ \Q \in \RN$. We denote by $DP^{\T}(\Q)$ the Frechet derivative of $P^{\T}$ at the point $\Q$ and $DP^{\T}(\Q)^*$ the adjoint of $DP^{\T}(\Q)$ defined by :
\begin{equation}
\fonction{DP^{\T}(\Q)^*}{\RN}{\R^{N-3}}{\ZZ}{DP^{\T}(\Q)^*(\ZZ) = \big( DP^{\T}_p(\Q)^*(\ZZ) \big)_{p=2,...,N-2}}
\end{equation}
satisfying :
\begin{equation}
\forall (\W,\boldsymbol{Z}) \in \RN_{0,0} \times \RN, \; h \di \sum_{p=1}^{N-1} DP^{\T}_p(\Q)(\W) Z_p = h \di \sum_{p=2}^{N-2} DP^{\T}_p(\Q)^*(\boldsymbol{Z}) W_p .
\end{equation}
The Frechet derivative $DP^{\T} (\Q) $ is said to be \textit{self-adjoint} if for any $p=2,...,N-2$, $DP^{\T}_p (\Q) = DP^{\T}_p (\Q)^*$.
\end{definition}

A simple calculation leads to the following result :
 
\begin{proposition}
\label{propfrechderiv}
Let $P$ be a second order finite differences operator, $ \T \in \TT$ and $ \Q \in \RN$. Then, for any $p=1,...,N-1$ and any $\W \in \RN$, the Frechet derivative of $P$ is given by
\begin{equation}\label{eqpropfrechderiv}
DP^{\T}_p(\Q)(\W) = \dfrac{\partial \PP}{\partial x} (\star_p) W_p + \dfrac{\partial \PP}{\partial v_-} (\star_p) (\Delta_- \W)_p + \dfrac{\partial \PP}{\partial v_+} (\star_p) (-\Delta_+ \W)_p + \dfrac{\partial \PP}{\partial w} (\star_p) (-\Delta_+ \circ \Delta_- \W)_p,
\end{equation}
where $\star := \big( \Q, \Delta_- \Q, (-\Delta_+ \Q), (-\Delta_- \circ \Delta_- \Q),\T,h \big) $.
\end{proposition}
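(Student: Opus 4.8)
The plan is to obtain \eqref{eqpropfrechderiv} by a direct application of the chain rule, the whole point being that the finite differences operators entering the definition of $P^{\T}$ are linear. First I would fix $\T \in \TT$, $\Q \in \RN$ and a direction $\W \in \RN$, and observe that for each $p=1,\dots,N-1$ the scalar $P^{\T}_p(\Q)$ is the composition of the smooth map $\PP$ with the map
\[
\Phi_p : \Q \longmapsto \big( Q_p, (\Delta_- \Q)_p, (-\Delta_+ \Q)_p, (-\Delta_+ \circ \Delta_- \Q)_p, t_p, h \big),
\]
whose last two components $t_p$ and $h$ are constants independent of $\Q$.

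The key observation is that the four non-constant components of $\Phi_p$ are all linear in $\Q$: the evaluation $\Q \mapsto Q_p$ is the $p$-th coordinate projection, while $\Delta_-$, $-\Delta_+$ and $-\Delta_+ \circ \Delta_-$ are linear operators by construction. Hence $\Phi_p$ is affine, and its Fréchet derivative at $\Q$ is its linear part, which sends $\W$ to $\big( W_p, (\Delta_- \W)_p, (-\Delta_+ \W)_p, (-\Delta_+ \circ \Delta_- \W)_p, 0, 0 \big)$.

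Then I would apply the chain rule, writing $\star_p$ for the point $\Phi_p(\Q)$ at which the partial derivatives of $\PP$ are evaluated. The derivative $DP^{\T}_p(\Q)(\W)$ is the sum over the slots of $\PP$ of the partial derivative of $\PP$ in that slot times the derivative of the corresponding component of $\Phi_p$; the $t$- and $\xi$-slots contribute nothing since $t_p$ and $h$ do not vary with $\Q$. Matching $\partial \PP / \partial x$ with $W_p$, $\partial \PP / \partial v_-$ with $(\Delta_- \W)_p$, $\partial \PP / \partial v_+$ with $(-\Delta_+ \W)_p$ and $\partial \PP / \partial w$ with $(-\Delta_+ \circ \Delta_- \W)_p$ yields exactly \eqref{eqpropfrechderiv}.

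I do not expect any genuine obstacle here. The smoothness of $\PP$ assumed at the start of Section \ref{section2} guarantees that the chain rule applies, and the only point requiring care is the purely clerical bookkeeping of pairing each partial derivative of $\PP$ with the correct linear component of $\Phi_p$.
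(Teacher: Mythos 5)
Your argument is correct and is exactly the ``simple calculation'' the paper leaves implicit: writing $P^{\T}_p(\Q) = \PP\big(\Phi_p(\Q)\big)$ with $\Phi_p$ affine (its non-constant components being the linear operators evaluation at $p$, $\Delta_-$, $-\Delta_+$ and $-\Delta_+ \circ \Delta_-$, with $t_p$ and $h$ constant) and applying the chain rule. Nothing further is needed.
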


Applying Lemmas \ref{dlf} and \ref{dibp} in Proposition \ref{propfrechderiv}, we then deduce the explicit form of  the adjoint of the Frechet derivative of a second order difference operator :

\begin{proposition}\label{propadjfrechderiv}
Let $P$ be a second order finite differences operator, $ \T \in \TT$ and $ \Q \in \RN$. Then, for any $p=2,...,N-2$ and any $\W \in \RN$, $ DP^{\T}_p(\Q)^*(\W) $ is equal to:
\begin{equation}\label{eqpropadjfrechderiv}
\begin{array}{c}
\left[ \dfrac{\partial \PP}{\partial x} (\star_p) - \left( -\Delta_+ \dfrac{\partial \PP}{\partial v_-} (\star) \right)_p - \left( \Delta_- \dfrac{\partial \PP}{\partial v_+} (\star) \right)_p + \left( -\Delta_+ \circ \Delta_- \dfrac{\partial \PP}{\partial w} (\star) \right)_p \right] W_p \\
+ \left[ \left(\Delta_- \dfrac{\partial \PP}{\partial w} (\star) \right)_p - \dfrac{\partial \PP}{\partial v_+} (\star_{p-1}) \right] (\Delta_- \W)_p \\
+ \left[ \left( - \Delta_+ \dfrac{\partial \PP}{\partial w} (\star) \right)_p - \dfrac{\partial \PP}{\partial v_-} (\star_{p+1}) \right] (-\Delta_+ \W)_p +\dfrac{\partial \PP}{\partial w} (\star_p) (-\Delta_+ \circ \Delta_- \W)_p,
\end{array}
\end{equation}
where $\star := \big( \Q, \Delta_- \Q, (-\Delta_+ \Q), (-\Delta_- \circ \Delta_- \Q),\T , h \big) $.
\end{proposition}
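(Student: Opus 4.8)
The plan is to compute $DP^{\T}_p(\Q)^*(\W)$ directly from its defining property, namely the adjointness relation
\begin{equation*}
h \di \sum_{p=1}^{N-1} DP^{\T}_p(\Q)(\W) Z_p = h \di \sum_{p=2}^{N-2} DP^{\T}_p(\Q)^*(\ZZ) W_p,
\end{equation*}
where on the left we substitute the explicit expression for $DP^{\T}_p(\Q)(\W)$ given in Proposition \ref{propfrechderiv}. First I would expand the left-hand sum into four separate sums corresponding to the four terms $\partial\PP/\partial x$, $\partial\PP/\partial v_-$, $\partial\PP/\partial v_+$ and $\partial\PP/\partial w$ appearing in \eqref{eqpropfrechderiv}. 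The zeroth-order term $\frac{\partial \PP}{\partial x}(\star_p) W_p Z_p$ requires no manipulation and contributes directly to the $W_p$ coefficient of the adjoint. The remaining three terms involve $(\Delta_- \W)_p$, $(-\Delta_+ \W)_p$ and $(-\Delta_+ \circ \Delta_- \W)_p$, and the goal is to transfer the finite differences operators off $\W$ and onto the coefficient vectors $\frac{\partial \PP}{\partial v_\pm}(\star)$ and $\frac{\partial \PP}{\partial w}(\star)$ by the summation-by-parts formulas of Lemma \ref{dibp}.

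The heart of the computation is the term with a double finite difference, $\frac{\partial \PP}{\partial w}(\star_p)\,(-\Delta_+ \circ \Delta_- \W)_p$. Here I would apply the second (double) integration-by-parts identity of Lemma \ref{dibp}, which moves the whole operator $-\Delta_+\circ\Delta_-$ onto the coefficient $\frac{\partial\PP}{\partial w}(\star)$, producing the term $\bigl(-\Delta_+\circ\Delta_- \frac{\partial \PP}{\partial w}(\star)\bigr)_p W_p$. However, the cleaner route — and the one matching the stated form of the adjoint, which still contains $(\Delta_- \W)_p$, $(-\Delta_+ \W)_p$ and $(-\Delta_+\circ\Delta_-\W)_p$ explicitly — is to integrate by parts only \emph{once} on each of the first-order and second-order terms. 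Concretely, for the $\frac{\partial \PP}{\partial v_-}(\star_p)(\Delta_-\W)_p$ term I transfer one $\Delta_-$ using the first identity of Lemma \ref{dibp}; for the $\frac{\partial \PP}{\partial v_+}(\star_p)(-\Delta_+\W)_p$ term I transfer one $\Delta_+$ using the second identity; and for the $w$-term I perform the double transfer but also keep track of the intermediate single-difference expressions. The index shifts appearing in Lemma \ref{dibp} (the sum ranges changing from $1,\dots,N-1$ to $2,\dots,N-2$, and the appearance of $\star_{p-1}$ and $\star_{p+1}$ rather than $\star_p$ in certain coefficients) are exactly what the condition $\W \in \RN_{0,0}$ is designed to accommodate, since the vanishing of $W_0, W_1, W_{N-1}, W_N$ kills all boundary contributions.

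The delicate bookkeeping step — which I expect to be the main obstacle — is correctly assembling the coefficients of $(\Delta_-\W)_p$ and $(-\Delta_+\W)_p$ in the final expression. These arise from two distinct sources: one contribution comes from partially integrating the pure first-order terms $\frac{\partial\PP}{\partial v_\pm}$, which after the shift produce evaluations at $\star_{p\pm 1}$; the other contribution comes from performing only a \emph{single} integration by parts on the double-difference $w$-term, leaving a residual $(\Delta_-\frac{\partial\PP}{\partial w}(\star))_p(\Delta_-\W)_p$ and $(-\Delta_+\frac{\partial\PP}{\partial w}(\star))_p(-\Delta_+\W)_p$. Matching these two sources term by term is where sign errors and off-by-one index errors are most likely to creep in, so I would verify each coefficient against the claimed formula \eqref{eqpropadjfrechderiv} by tracking the exact summation bounds at every step. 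Once all four coefficient groups — those of $W_p$, $(\Delta_-\W)_p$, $(-\Delta_+\W)_p$ and $(-\Delta_+\circ\Delta_-\W)_p$ — are collected, comparing with the defining identity for the adjoint (valid for all $\ZZ$, hence forcing equality of the bracketed coefficients) yields precisely the expression in \eqref{eqpropadjfrechderiv}, completing the proof.
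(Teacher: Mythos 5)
There is a genuine conceptual gap here: you have conflated the two distinct vectors appearing in Definition \ref{defselfadjoint}. In the defining relation $h \sum_{p=1}^{N-1} DP^{\T}_p(\Q)(\W) Z_p = h \sum_{p=2}^{N-2} DP^{\T}_p(\Q)^*(\ZZ) W_p$, the vector $\W \in \RN_{0,0}$ is the zero-boundary test function while $\ZZ \in \RN$ is the \emph{argument} of the adjoint; in the statement of Proposition \ref{propadjfrechderiv} that argument has merely been renamed $\W$. Hence the differences $(\Delta_- \W)_p$, $(-\Delta_+ \W)_p$ and $(-\Delta_+ \circ \Delta_- \W)_p$ appearing in \eqref{eqpropadjfrechderiv} act on the adjoint's argument (the $\ZZ$ of the definition), not on the test function. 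Your proposed ``cleaner route'' --- integrating by parts only once and leaving residuals such as $\bigl(\Delta_- \frac{\partial \PP}{\partial w}(\star)\bigr)_p (\Delta_- \W)_p$ with a difference still sitting on the test function --- therefore cannot terminate in a form from which the adjoint can be read off: the defining identity requires the right-hand side to be $\sum_{p=2}^{N-2} (\cdot)_p W_p$ with the test function \emph{undifferentiated}, so a partially integrated expression cannot be compared with it. Every difference operator must be transferred off the test function completely, and the coefficients are then identified by varying the test function $\W$ over $\RN_{0,0}$ for fixed $\ZZ$ --- not ``for all $\ZZ$'' as you write at the end, which is another symptom of the same role reversal.

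The second, related omission is the factor $Z_p$. Each summand on the left-hand side has the form $\frac{\partial \PP}{\partial v_-}(\star_p)(\Delta_- \W)_p Z_p$, etc., so Lemma \ref{dibp} transfers the operators onto the \emph{products} $\frac{\partial \PP}{\partial v_\pm}(\star)\,\ZZ$ and $\frac{\partial \PP}{\partial w}(\star)\,\ZZ$, not onto the bare coefficient vectors as you state. One must then expand these differences of products using the discrete Leibniz formulas of Lemma \ref{dlf} --- which the paper explicitly invokes alongside Lemma \ref{dibp}, and which your proposal never uses. It is precisely this expansion, not any ``partial'' integration by parts, that generates every term of \eqref{eqpropadjfrechderiv}: for instance $\bigl(\Delta_+ \bigl[\frac{\partial \PP}{\partial v_-}(\star)\ZZ\bigr]\bigr)_p = \bigl(\Delta_+ \frac{\partial \PP}{\partial v_-}(\star)\bigr)_p Z_p + \frac{\partial \PP}{\partial v_-}(\star_{p+1})(\Delta_+ \ZZ)_p$ produces the shifted coefficient $-\frac{\partial \PP}{\partial v_-}(\star_{p+1})$ of $(-\Delta_+ \ZZ)_p$, while the four-term Leibniz formula of Lemma \ref{dlf} for $-\Delta_+ \circ \Delta_-$ applied to $\frac{\partial \PP}{\partial w}(\star)\ZZ$ yields simultaneously the contribution $\bigl(-\Delta_+ \circ \Delta_- \frac{\partial \PP}{\partial w}(\star)\bigr)_p$ to the $W_p$-coefficient, the two cross terms $\bigl(\Delta_- \frac{\partial \PP}{\partial w}(\star)\bigr)_p(\Delta_- \ZZ)_p$ and $\bigl(-\Delta_+ \frac{\partial \PP}{\partial w}(\star)\bigr)_p(-\Delta_+ \ZZ)_p$, and the final term $\frac{\partial \PP}{\partial w}(\star_p)(-\Delta_+ \circ \Delta_- \ZZ)_p$. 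With this correction --- full transfer off the test function via Lemma \ref{dibp}, then Leibniz expansion via Lemma \ref{dlf} --- your computation would become exactly the paper's intended argument; as written, the mechanism you describe does not produce the stated formula.
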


The main result of this Section is the following explicit characterization of second order finite differences operators $P$ whose all Frechet derivatives are self-adjoint for any $\T \in \TT$ :

\begin{theorem}\label{thmcaractadj}
Let $P$ be a second order finite differences operator. Then, $DP^{\T}(\Q)$ is self adjoint for any $\T \in \TT$ and any $\Q \in \RN$ if and only if $P$ satisfies the discrete Helmholtz's condition \eqref{helmconddisc}.
\end{theorem}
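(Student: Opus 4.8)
The plan is to compare directly the two explicit expressions for $DP^{\T}_p(\Q)$ and $DP^{\T}_p(\Q)^*$ furnished by Propositions \ref{propfrechderiv} and \ref{propadjfrechderiv}, and to read off self-adjointness as a set of pointwise identities on $\PP$. Both act on $\W \in \RN$ and, being written through $W_p$, $(\Delta_- \W)_p$, $(-\Delta_+ \W)_p$ and $(-\Delta_+ \circ \Delta_- \W)_p$, they depend only on the three components $W_{p-1}, W_p, W_{p+1}$. For a fixed index $p \in \{2,\dots,N-2\}$ I would form the difference $DP^{\T}_p(\Q)(\W) - DP^{\T}_p(\Q)^*(\W)$ and observe first that the coefficient of $(-\Delta_+ \circ \Delta_- \W)_p$ cancels, since it equals $\frac{\partial \PP}{\partial w}(\star_p)$ in both formulas. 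The difference is therefore a linear combination of $W_p$, $(\Delta_- \W)_p$ and $(-\Delta_+ \W)_p$ only.

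The structural point is that, as functions of $(W_{p-1},W_p,W_{p+1})\in\R^3$, the three functionals $W_p$, $(\Delta_- \W)_p = (W_p - W_{p-1})/h$ and $(-\Delta_+ \W)_p = (W_{p+1}-W_p)/h$ are linearly independent; hence $DP^{\T}_p(\Q) = DP^{\T}_p(\Q)^*$ holds for every $\W \in \RN$ if and only if the three associated coefficients vanish simultaneously. Reading these off from \eqref{eqpropfrechderiv} and \eqref{eqpropadjfrechderiv}, the vanishing of the coefficient of $(\Delta_- \W)_p$ is exactly \eqref{helmconddisc} at index $p$, namely $\left(\Delta_- \frac{\partial \PP}{\partial w}(\star)\right)_p = \frac{\partial \PP}{\partial v_-}(\star_p) + \frac{\partial \PP}{\partial v_+}(\star_{p-1})$, while the vanishing of the coefficient of $(-\Delta_+ \W)_p$ reads $\left(-\Delta_+ \frac{\partial \PP}{\partial w}(\star)\right)_p = \frac{\partial \PP}{\partial v_+}(\star_p) + \frac{\partial \PP}{\partial v_-}(\star_{p+1})$. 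Using the elementary identity $(-\Delta_+ \boldsymbol{F})_p = (\Delta_- \boldsymbol{F})_{p+1}$, this second equation is precisely \eqref{helmconddisc} at index $p+1$.

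It then remains to dispose of the coefficient of $W_p$, which after simplification is $\left(-\Delta_+ \circ \Delta_- \frac{\partial \PP}{\partial w}(\star)\right)_p = \left(-\Delta_+ \frac{\partial \PP}{\partial v_-}(\star)\right)_p + \left(\Delta_- \frac{\partial \PP}{\partial v_+}(\star)\right)_p$. I would show that this equation is redundant: expanding $-\Delta_+ \circ \Delta_-$ as the forward difference quotient of $\Delta_-$ and substituting the two identities just obtained (at indices $p$ and $p+1$) collapses it to a trivial identity. Consequently, for each fixed $\T \in \TT$ and $\Q \in \RN$, self-adjointness at index $p$ is equivalent to \eqref{helmconddisc} holding simultaneously at indices $p$ and $p+1$.

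Finally I would reconcile the index ranges, the one place requiring care: self-adjointness is imposed for $p \in \{2,\dots,N-2\}$, whereas \eqref{helmconddisc} ranges over $p \in \{2,\dots,N-1\}$. Letting $p$ run over $\{2,\dots,N-2\}$, the coefficient of $(\Delta_- \W)_p$ supplies \eqref{helmconddisc} at indices $\{2,\dots,N-2\}$ and the coefficient of $(-\Delta_+ \W)_p$ supplies it at indices $\{3,\dots,N-1\}$, whose union is exactly $\{2,\dots,N-1\}$; conversely, for every $p \in \{2,\dots,N-2\}$ both $p$ and $p+1$ lie in $\{2,\dots,N-1\}$, which gives back self-adjointness. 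Since the whole argument is uniform in $\T \in \TT$ and $\Q \in \RN$, the stated equivalence follows. The main obstacle is not any single computation but the careful bookkeeping of the coefficient identifications together with this index matching; the automatic cancellation of the $(-\Delta_+ \circ \Delta_- \W)_p$ term and the redundancy of the $W_p$-equation are precisely what reduce the three genuine conditions to \eqref{helmconddisc}.
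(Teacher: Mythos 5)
Your proposal is correct and follows essentially the same route as the paper: comparing the explicit expressions of Propositions \ref{propfrechderiv} and \ref{propadjfrechderiv}, reducing self-adjointness to three coefficient identities (of which the $(\Delta_-\W)_p$ and $(-\Delta_+\W)_p$ ones are \eqref{helmconddisc} at indices $p$ and $p+1$, the $W_p$ one being redundant), exactly as in the paper's three displayed equalities. Your treatment is in fact slightly more careful than the paper's, which asserts the final equivalence for $p=2,\dots,N-1$ without spelling out the linear independence of the three functionals, the redundancy of the $W_p$-equation, or the index matching that you verify explicitly.
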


\begin{proof}
Let $\T \in \TT$ and $\Q \in \RN$. According to Proposition \ref{propadjfrechderiv}, we have that $DP^{\T}(\Q)$ is self adjoint if and only if the right term of \eqref{eqpropfrechderiv} is equal to \eqref{eqpropadjfrechderiv} for any $\W \in \R^{N+1}$ and any $p=2,...,N-2$. As a consequence, $DP^{\T}(\Q)$ is self adjoint if and only for any $p=2,...,N-2$, the following system is satisfied :
\begin{eqnarray}
& \left( -\Delta_+ \circ \Delta_- \dfrac{\partial \overline{P}}{\partial w} (\star) \right)_p - \left( -\Delta_+ \dfrac{\partial \overline{P}}{\partial v_-} (\star) \right)_p - \left( \Delta_- \dfrac{\partial \overline{P}}{\partial v_+} (\star) \right)_p  = 0, &\\
&\left( - \Delta_+ \dfrac{\partial \overline{P}}{\partial w} (\star) \right)_p - \dfrac{\partial \overline{P}}{\partial v_-} (\star_{p+1}) = \dfrac{\partial \overline{P}}{\partial v_+} (\star_p), &\\
&\left(\Delta_- \dfrac{\partial \overline{P}}{\partial w} (\star) \right)_p - \dfrac{\partial \overline{P}}{\partial v_+} (\star_{p-1}) = \dfrac{\partial \overline{P}}{\partial v_-} (\star_p) , &
\end{eqnarray}
It is easy to verify that this system is equivalent to :
\begin{equation}
\left(\Delta_- \dfrac{\partial \overline{P}}{\partial w} (\star) \right)_p = \dfrac{\partial \overline{P}}{\partial v_-} (\star_{p}) + \dfrac{\partial \overline{P}}{\partial v_+} (\star_{p-1}),
\end{equation}
for any $p=2,...,N-1$. This concludes the proof.
\end{proof}

\section{Proof of Theorem \ref{thmhelmdisc}}\label{section6}

\subsection{Sufficient condition}
Let $P$ be a second order finite differences operator associated to a second order discrete Euler-Lagrange equation. Let $(L_-,L_+) $ denotes the associated couple of Lagrangians. According to Proposition \ref{propelhSOFDE}, we have that $\PP$ satisfies \eqref{eqpropelhSOFDE1}. With a simple calculation, we can prove that for any $\T \in \TT$, any $\Q \in \RN$ and any $p=1,...,N-1$, the three following equalities hold:
\begin{equation}
\dfrac{\partial \PP}{\partial w} (\star_p) = - \dfrac{\partial^2 L_-}{\partial v^2}\big( Q_{p+1},(\Delta_- \Q)_{p+1},t_{p+1},h\big) - \dfrac{\partial^2 L_+}{\partial v^2}\big( Q_{p-1},(-\Delta_+ \Q)_{p-1},t_{p-1},h\big),
\end{equation}
\begin{multline}
\dfrac{\partial \PP}{\partial v_-} (\star_p) = \dfrac{\partial^2 L_-}{\partial x \partial v}\big( Q_{p},(\Delta_- \Q)_{p},t_{p},h\big) - \dfrac{\partial^2 L_+}{\partial x \partial v}\big( Q_{p-1},(-\Delta_+ \Q)_{p-1},t_{p-1},h\big)  \\  + \Delta_+ \left( \dfrac{\partial^2 L_-}{\partial v^2} ( \Q,\Delta_- \Q,\T,h ) \right)_p
\end{multline}
\begin{multline}
\dfrac{\partial \PP}{\partial v_+} (\star_p) = \dfrac{\partial^2 L_+}{\partial x \partial v}\big( Q_{p},(-\Delta_+ \Q)_{p},t_{p},h\big) - \dfrac{\partial^2 L_-}{\partial x \partial v}\big( Q_{p+1},(\Delta_- \Q)_{p+1},t_{p+1},h\big)  \\  - \Delta_- \left( \dfrac{\partial^2 L_+}{\partial v^2} ( \Q,-\Delta_+ \Q,\T,h ) \right)_p,
\end{multline}
where $\star := \big( \Q,\Delta_- \Q,(-\Delta_+ \Q),(-\Delta_+ \circ \Delta_- \Q),\T , h\big)$. Finally, from these three equalities, we prove that $P$ satisfies the discrete Helmholtz's condition \eqref{helmconddisc}.

\subsection{Necessary condition} 
Let $P$ be a second order finite differences operator satisfying the discrete Helmholtz's condition \eqref{helmconddisc}. The proof is based on the following proposition:

\begin{proposition}
\label{propnececond}
Let $P$ be a second order finite differences operator satisfying the discrete Helmholtz's condition \eqref{helmconddisc}. Let $L_1$ be the following augmented Lagrangian:
\begin{equation}
\fonction{L_1}{\R^5 \times \R^+_*}{\R}{(x,v_-,v_+,w,t,\xi)}{x \di \int_0^1 \PP (\lambda x,\lambda v_-,\lambda v_+,\lambda w,t,\xi) \; d\lambda}
\end{equation}
and, for any $\T \in \TT$, let $\LL^{\T}_1 $ denote the following augmented Lagrangian functional
\begin{equation}
\fonction{\LL^{\T}_1}{\RN}{\R}{\Q}{h \di \sum_{p=1}^{N-1} L_1 \big( Q_p , (\Delta_- \Q)_p , (-\Delta_+ \Q)_p, (-\Delta_+ \circ \Delta_- \Q)_p ,t_p ,h \big). }
\end{equation}
Then,
\begin{enumerate}
\item for any $ \T \in \TT$ and any $ (\Q,\W) \in \RN \times \RN_{0,0}$, it holds:
\begin{equation}
D\LL^{\T}_1 (\Q)(\W) = h \di \sum_{p=2}^{N-2} P^{\T}_p (\Q) W_p,
\end{equation}
\item there exists a couple of Lagrangians $(L_-,L_+) $ such that for any $\T \in \TT$, any $\Q \in \RN $, it holds:
\begin{multline}
L_1 \big( \Q, \Delta_- \Q, (-\Delta_+ \Q), (-\Delta_- \circ \Delta_- \Q),\T,h \big) \\ = L_- \big( \Q, \Delta_- \Q,\T,h \big) + L_+ \big( \Q , (-\Delta_+ \Q) , \T,h \big).
\end{multline}
\end{enumerate}
\end{proposition}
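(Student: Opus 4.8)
The plan is to recognize Proposition \ref{propnececond} as the discrete counterpart of the classical Tonti--Vainberg homotopy construction, in which a first-order homotopy integral of the operator produces a Lagrangian whose Euler--Lagrange expression reproduces the operator, the self-adjointness being precisely what makes the reconstruction work. The two assertions are logically independent and I would treat them separately: assertion (1) states that the augmented functional $\LL^{\T}_1$ admits $P^{\T}(\Q)=0$ as its discrete Euler--Lagrange equation on the interior indices, while assertion (2) is a purely kinematic reduction showing that $L_1$, once restricted to actual discrete curves, splits additively into a backward and a forward Lagrangian.

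For assertion (1) I would first differentiate $\LL_1^\T$ at $\Q$ in a direction $\W\in\RN_{0,0}$. By the chain rule this yields a sum over $p=1,\dots,N-1$ of four terms, namely $\partial L_1/\partial x$, $\partial L_1/\partial v_-$, $\partial L_1/\partial v_+$, $\partial L_1/\partial w$ evaluated along $\star$, paired respectively with $W_p$, $(\Delta_-\W)_p$, $(-\Delta_+\W)_p$ and $(-\Delta_+\circ\Delta_-\W)_p$ --- exactly the pattern of Proposition \ref{propfrechderiv}. Since $\W$ vanishes at the four boundary indices, the discrete integrations by part of Lemma \ref{dibp} apply cleanly and move every difference operator off $\W$, rewriting $D\LL_1^\T(\Q)(\W)$ as $h\sum_{p=2}^{N-2} E_p\,W_p$, where $E_p$ is the discrete Euler--Lagrange expression of $L_1$. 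It then remains to identify $E_p$ with $P_p^\T(\Q)=\PP(\star_p)$. Here I would insert the homotopy definition of $L_1$, computing $\partial L_1/\partial x=\int_0^1\PP(\lambda\,\cdot)\,d\lambda + x\int_0^1\lambda\,(\partial\PP/\partial x)(\lambda\,\cdot)\,d\lambda$ together with $\partial L_1/\partial v_-=x\int_0^1\lambda\,(\partial\PP/\partial v_-)(\lambda\,\cdot)\,d\lambda$ and the analogous expressions for $\partial L_1/\partial v_+$ and $\partial L_1/\partial w$, and exploiting the elementary identity $\PP(\star_p)=\int_0^1\frac{d}{d\lambda}\big[\lambda\,\PP(\lambda\star_p)\big]\,d\lambda$, whose integrand is $\PP+\lambda(x\partial_x+v_-\partial_{v_-}+v_+\partial_{v_+}+w\partial_w)\PP$ by Euler's relation. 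Matching this against $E_p$ reduces the claim to showing that the difference-operator contributions reproduce the three velocity terms; this is exactly where the discrete Helmholtz condition \eqref{helmconddisc} (equivalently, self-adjointness via Theorem \ref{thmcaractadj}) is used to reconcile the neighbouring-index evaluations produced by $\Delta_\pm$.

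For assertion (2) I would first record the kinematic identity valid on every discrete curve, $(-\Delta_+\circ\Delta_-\Q)_p=\tfrac1h\big[(-\Delta_+\Q)_p-(\Delta_-\Q)_p\big]$, i.e. $w=(v_+-v_-)/\xi$ on the surface swept out by actual trajectories. Hence only the restriction $F(x,v_-,v_+,t,\xi):=\PP(x,v_-,v_+,(v_+-v_-)/\xi,t,\xi)$ matters, and because $\lambda(v_+-v_-)/\xi=((\lambda v_+)-(\lambda v_-))/\xi$ the homotopy respects the surface, giving $L_1$ on curves equal to $\widetilde L_1(x,v_-,v_+,t,\xi)=x\int_0^1 F(\lambda x,\lambda v_-,\lambda v_+,t,\xi)\,d\lambda$. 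A function of $(v_-,v_+)$ is the sum of a function of $v_-$ and a function of $v_+$ precisely when $\partial_{v_-}\partial_{v_+}$ of it vanishes; since $\partial_{v_-}\partial_{v_+}\widetilde L_1 = x\int_0^1\lambda^2\,\partial_{v_-}\partial_{v_+}F(\lambda\,\cdot)\,d\lambda$, it suffices to prove $\partial_{v_-}\partial_{v_+}F\equiv 0$. This I would derive from \eqref{helmconddisc}: using that the common edge velocity satisfies $(\Delta_-\Q)_p=(-\Delta_+\Q)_{p-1}$ and exploiting the freedom of $\T\in\TT$ and $\Q\in\RN$ (which needs $N\ge4$ so that $Q_{p-2},Q_{p-1},Q_p,Q_{p+1}$ and $h$ are free), the single on-curve identity separates into pointwise constraints --- one forcing $\partial_{v_-}F=\partial\PP/\partial v_- -\tfrac1\xi\,\partial\PP/\partial w$ to be independent of $x$ and $v_+$ --- which yields $\partial_{v_-}\partial_{v_+}F=0$. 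Integrating then produces $L_-$ and $L_+$ explicitly, e.g. $L_-(x,v_-,t,\xi)=\widetilde L_1(x,v_-,0,t,\xi)$ and $L_+(x,v_+,t,\xi)=\widetilde L_1(x,0,v_+,t,\xi)-\widetilde L_1(x,0,0,t,\xi)$.

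The hard part will be twofold. In assertion (1) the genuine difficulty is the index bookkeeping: after integration by part the terms carry shifted arguments $\star_{p\pm1}$, and collapsing them into the clean Euler-relation combination at the single index $p$ is exactly the step that fails without \eqref{helmconddisc}, so the technical heart is to organize the telescoping in $\lambda$ simultaneously with the Helmholtz cancellation in $p$. In assertion (2) the delicate point is the passage from the on-curve identity \eqref{helmconddisc} to genuinely pointwise constraints on $\PP$, which relies on the free variables at a fixed interior index decoupling into a ``$p$-block'' and a ``$(p-1)$-block'' sharing only the edge velocity and the step $\xi$; verifying that this separation is legitimate for all admissible partitions is the step I would scrutinize most carefully.
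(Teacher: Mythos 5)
Your proposal is correct and follows essentially the same route as the paper. Part (2) is the paper's argument almost verbatim: your $F$ is the paper's $\ell$, the paper obtains $\partial^2 \ell / \partial z \partial y = 0$ precisely by differentiating \eqref{helmconddisc} with respect to $Q_{p+1}$ and $Q_{p-2}$ (which is exactly the block-decoupling you propose to scrutinize, and which is legitimate since $Q_{p+1}$ enters only the $p$-block and $Q_{p-2}$ only the $(p-1)$-block), and it then defines $L_\pm$ by the same scaling integrals $x\int_0^1 \alpha(\lambda x, \lambda v, t, \xi)\,d\lambda$. For part (1) your ingredients coincide with the paper's but your execution is heavier: rather than expanding the four partial derivatives of $L_1$, integrating by parts, and reconciling the shifted evaluations $\star_{p\pm1}$ by hand, the paper observes that $\LL^{\T}_1(\Q) = h \sum_{p=1}^{N-1} Q_p \int_0^1 P^{\T}_p(\lambda \Q)\,d\lambda$ (scaling commutes with the difference operators), differentiates this product, flips the $\W$-directional derivative onto the $\Q$-direction in a single stroke via self-adjointness of $DP^{\T}(\lambda\Q)$ (Theorem \ref{thmcaractadj} together with Definition \ref{defselfadjoint}), and concludes with your same integration by parts in $\lambda$ using $\partial_\lambda P^{\T}_p(\lambda\Q) = DP^{\T}_p(\lambda\Q)(\Q)$; this functional-level formulation entirely absorbs the index bookkeeping you correctly identify as the technical heart, so you would do well to adopt it. One slip in part (2): the identity does not force $\partial_{v_-}F$ to be independent of $x$ --- the variable $x = Q_p$ occurs in both blocks (directly in the $p$-block and through the shared edge velocity $(Q_p - Q_{p-1})/h$ in the $(p-1)$-block), and indeed the summand $\alpha(x,y,t,\xi)$ genuinely couples $x$ and $v_-$ in general; differentiating with respect to $Q_{p+1}$ yields only independence of $v_+$, which is all you actually use, so the overclaim is harmless to your conclusion $\partial_{v_-}\partial_{v_+}F \equiv 0$.
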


\begin{proof}
$1.$ Let $\T \in \TT$ and $ (\Q,\W) \in \RN \times \RN_{0,0}$. We have:
\begin{equation}
\LL^{\T}_1 (\Q) = h \di \sum_{p=1}^{N-1} Q_p \di \int_0^1 P^{\T}_p(\lambda \Q) \; d\lambda.
\end{equation}
Thus:
\begin{equation}\label{eq443}
D\LL^{\T}_1(\Q)(\W) = h \di \sum_{p=1}^{N-1} W_p \di \int_0^1 P^{\T}_p (\lambda \Q) \; d\lambda + h \di \sum_{p=1}^{N-1} Q_p \di \int_0^1 DP^{\T}_p (\lambda \Q)(\lambda \W) \; d\lambda.
\end{equation}
As $P$ satisfies the discrete Helmholtz's condition \eqref{helmconddisc} and according to Theorem \ref{thmcaractadj}, $DP^{\T}(\lambda \Q)$ is self-adjoint. Using Definition \ref{defselfadjoint}, the following equality holds:
\begin{equation}\label{eq444}
h \di \sum_{p=1}^{N-1} DP^{\T}_p (\lambda \Q)(\lambda \W) Q_p = h \di \sum_{p=2}^{N-2} \lambda DP^{\T}_p (\lambda \Q)(\Q) W_p .
\end{equation}
Then, from Equalities \eqref{eq443} and \eqref{eq444}, we have
\begin{equation}
D\LL^{\T}_1(\Q)(\W) = h \di \sum_{p=2}^{N-2} W_p \di \int_0^1 P^{\T}_p (\lambda \Q) \; d\lambda + h \di \sum_{p=2}^{N-2} W_p \di \int_0^1 \lambda  DP^{\T}_p (\lambda \Q)(\Q)\; d\lambda
\end{equation}
As a consequence, using the equality $\partial / \partial \lambda \big( P^{\T}_p(\lambda \Q) \big) = DP^{\T}_p (\lambda \Q)(\Q)$ and an integration by part with respect to $\lambda$ on the second integral, we obtain :
\begin{equation}
D\mathcal{L}^{\T}_1(\Q)(\W) = h \di \sum_{p=2}^{N-2} P^{\T}_p (\Q) W_p.
\end{equation}
$2.$ Since $P$ satisfies the discrete Helmholtz's condition \eqref{helmconddisc}, we have for any $\T \in \TT$ and any $\Q \in \RN$ :
\begin{equation}
\forall p=2,...,N-1, \; \dfrac{1}{h} \left( \dfrac{\partial \PP}{\partial w} (\star_p) - \dfrac{\partial \PP}{\partial w} (\star_{p-1}) \right) = \dfrac{\partial \PP}{\partial v_-} (\star_p) + \dfrac{\partial \PP}{\partial v_+} (\star_{p-1}),
\end{equation}
where $\star := \big( \Q, \Delta_- \Q, (-\Delta_+ \Q), (-\Delta_- \circ \Delta_- \Q),\T,h \big) $. As it is true for any $\Q \in \RN$, we can differentiate the previous equality with respect to $Q_{p-2}$ and $Q_{p+1}$. It leads to the two following equalities holding for any $\T \in \TT$, any $\Q \in \RN$ and any $p=2,...,N-1$ :
\begin{equation}
\dfrac{1}{h} \left( \dfrac{1}{h} \dfrac{\partial^2 \PP}{\partial v_- \partial w} (\star_{p-1}) - \dfrac{1}{h^2} \dfrac{\partial^2 \PP}{\partial w^2} (\star_{p-1}) \right) = -\dfrac{1}{h} \dfrac{\partial^2 \PP}{\partial v_- \partial v_+} (\star_{p-1}) + \dfrac{1}{h^2} \dfrac{\partial^2 \PP}{\partial w \partial v_+} (\star_{p-1})
\end{equation}
and
\begin{equation}
\dfrac{1}{h} \left( \dfrac{1}{h} \dfrac{\partial^2 \PP}{\partial v_+ \partial w} (\star_p) + \dfrac{1}{h^2} \dfrac{\partial^2 \PP}{\partial w^2} (\star_p) \right) = \dfrac{1}{h} \dfrac{\partial^2 \PP}{\partial v_+ \partial v_-} (\star_p) + \dfrac{1}{h^2} \dfrac{\partial^2 \PP}{\partial w \partial v_-} (\star_p).
\end{equation}
Finally, we have for any $\T \in \TT$, any $\Q \in \RN$ and any $p=1,...,N-1$ :
\begin{equation}\label{eq888}
\dfrac{\partial^2 \PP}{\partial v_+ \partial v_-} (\star_p) + \dfrac{1}{h} \left(\dfrac{\partial^2 \PP}{\partial w \partial v_-} (\star_p) - \dfrac{\partial^2 \PP}{\partial v_+ \partial w} (\star_p) \right) +\dfrac{1}{h^2} \dfrac{\partial^2 \PP}{\partial w^2} (\star_p) = 0.
\end{equation}
Since Equality \eqref{eq888} is true for any $\T \in \TT$, any $\Q \in \RN$ and any $p=1,...,N-1$, we have for any $(x,y,z,t,\xi) \in \R^4 \times \R^+_* $:
\begin{multline}\label{eqproof}
\dfrac{\partial^2 \PP}{\partial v_+ \partial v_-} \Big(x,y,z,\dfrac{z-y}{\xi},t,\xi\Big) + \dfrac{1}{h^2} \dfrac{\partial^2 \PP}{\partial w^2} \Big(x,y,z,\dfrac{z-y}{\xi},t,\xi \Big) \\ +\dfrac{1}{h} \left(\dfrac{\partial^2 \PP}{\partial w \partial v_-} \Big(x,y,z,\dfrac{z-y}{\xi},t,\xi \Big) - \dfrac{\partial^2 \PP}{\partial v_+ \partial w} \Big(x,y,z,\dfrac{z-y}{\xi},t,\xi \Big) \right)= 0.
\end{multline}
Let us define:
\begin{equation}
\fonction{\ell}{\R^4 \times \R^+_*}{\R}{(x,y,z,t,\xi)}{\PP \Big( x,y,z,\dfrac{z-y}{\xi},t,\xi \Big).}
\end{equation}
According to \eqref{eqproof}, it holds:
\begin{equation}
\forall (x,y,z,t,\xi) \in \R^4 \times \R^+_*, \;  \dfrac{\partial^2 \ell}{\partial z \partial y}(x,y,z,t,\xi) = 0.
\end{equation}
We deduce that the variables $y$ and $z$ are separable in $\ell$. Precisely, there exist two functions $\alpha, \; \beta : \R^3 \times \R^+_* \longrightarrow \R$ such that:
\begin{equation}
\forall (x,y,z,t,\xi) \in \R^4 \times \R^+_*, \; \ell (x,y,z,t,\xi) = \alpha (x,y,t,\xi) + \beta (x,z,t,\xi).
\end{equation}
Finally, we have for any $\T \in \TT$, any $\Q \in \RN$ and any $p=1,...,N-1$:
\begin{eqnarray}
L_1 (\star_p) & = & Q_p \left( \di \int_0^1 \PP \big( \lambda Q_p, \lambda (\Delta_- \Q)_p, \lambda (-\Delta_+ \Q)_p,\lambda (-\Delta_+ \circ \Delta_- \Q)_p,t_p,h \big) \; d\lambda \right) \\
& = & Q_p \left( \di \int_0^1 \ell \big( \lambda Q_p, \lambda (\Delta_- \Q)_p, \lambda (-\Delta_+ \Q)_p,t_p,h \big) \; d\lambda \right) \\
& = & Q_p \left( \di \int_0^1 \alpha \big( \lambda Q_p, \lambda (\Delta_- \Q)_p,t_p,h \big) + \beta \big( \lambda Q_p, \lambda (-\Delta_+ \Q)_p,t_p,h \big) \; d\lambda \right) \\
& = & L_- \big( Q_p , (\Delta_- \Q)_p , t_p,h \big) + L_+ \big( Q_p , (-\Delta_+ \Q)_p , t_p,h \big),
\end{eqnarray}
where
\begin{equation}
\fonction{L_-}{\R^3 \times \R^+_*}{\R}{(x,v,t,\xi)}{x \di \int_0^1 \alpha(\lambda x,\lambda v,t,\xi) \; d\lambda}
\end{equation}
and 
\begin{equation}
\fonction{L_+}{\R^3 \times \R^+_*}{\R}{(x,v,t,\xi)}{x \di \int_0^1 \beta(\lambda x,\lambda v,t,\xi) \; d\lambda.}
\end{equation}
This concludes the proof.
\end{proof}

The proof of Theorem \ref{thmhelmdisc} can be deduced from Proposition \ref{propnececond} as follows. For any $\T \in \TT$, we define the following discrete Lagrangian functional :
\begin{equation}
\label{eq555}
\fonction{\LL^{\T}}{\RN}{\R}{\Q}{h \di \sum_{p=1}^N L_- \big( Q_p, (\Delta_- \Q)_p,t_p ,h\big) + h \di \sum_{p=0}^{N-1} L_+ \big( Q_p, (-\Delta_+ \Q)_p,t_p,h \big)}
\end{equation}
where $(L_-,L_+)$ is the couple of Lagrangians given in the point $2$ of Proposition \ref{propnececond}. We easily verify that for any $\T \in \TT$ and any $\Q \in \RN$ we have 
\begin{equation}
\LL^{\T}(\Q) = \LL^{\T}_1 (\Q) + h L_- \left( Q_N, \dfrac{Q_N - Q_{N-1}}{h},t_N,h \right) + h L_+ \left( Q_0, \dfrac{Q_1 - Q_{0}}{h},t_0,h \right),
\end{equation}
where $\LL^{\T}_1$ is defined in Proposition \ref{propnececond}. As a consequence, for any $\T \in \TT$, any $\Q \in \RN$ and any $\W \in \RN_{0,0}$ we obtain that
\begin{equation}\label{eq111}
D\LL^{\T} (\Q)(\W) = D\LL^{\T}_1 (\Q)(\W) = h \di \sum_{p=2}^{N-2} P^{\T}_p(\Q) W_p .
\end{equation}
However, using the same method than in the proof of Theorem \ref{thmdisclag}, we deduce from Equation \eqref{eq555} that for any $\T \in \TT$, any $\Q \in \RN$ and any $\W \in \RN_{0,0}$:
\begin{equation}\label{eq112}
D\LL^{\T} (\Q)(\W) = h \di \sum_{p=2}^{N-2} \left[ \frac{\partial L_-}{\partial x} (\ast_p) + \frac{\partial L_+}{\partial x} (\ast \ast_p) + \Delta_+ \left( \frac{\partial L_-}{\partial v} (\ast) \right)_p - \Delta_- \left( \frac{\partial L_+}{\partial v} (\ast \ast) \right)_p \right] W_p,
\end{equation}
where $ \ast := (\Q,\Delta_- \Q,\T,h) $ and $\ast \ast := (\Q,-\Delta_+ \Q,\T,h) $. Combining Equalities \eqref{eq111} and \eqref{eq112}, we conclude that for any $\T \in \TT$, any $\Q \in \RN$ and any $p=2,...,N-2$:
\begin{equation}\label{eq886}
P^{\T}_p (\Q) = \frac{\partial L_-}{\partial x} (\ast_p) + \frac{\partial L_+}{\partial x} (\ast \ast_p) + \Delta_+ \left( \frac{\partial L_-}{\partial v} (\ast) \right)_p - \Delta_- \left( \frac{\partial L_+}{\partial v} (\ast \ast) \right)_p.
\end{equation}
In order to finish the proof of Theorem \ref{thmhelmdisc}, we have just to prove that Equality \eqref{eq886} is still true for $p=1$ and $p=N-1$. We only prove it for $p=N-1$. The case $p=1$ can be proved in a similar way. \\

Let $\T \in \TT$ and $\Q \in \RN$. We denote by $\sigma (\T) = (t_{p+1})_{p=0,...,N} \in \TT$ and $\sigma (\Q) = (Q_{p+1})_{p=0,...,N} \in \RN$ where $t_{N+1} := t_N + h$ and $Q_{N+1} := 0$. From Equality \eqref{eq886}, we obtain
\begin{multline}
P^{\T}_{N-1} \big( \Q \big) = P^{\sigma(\T)}_{N-2} \big( \sigma (\Q) \big) = \di \frac{\partial L_-}{\partial x} \big(\sigma (\ast)_{N-2} \big) + \frac{\partial L_+}{\partial x} \big( \sigma (\ast \ast)_{N-2} \big) \\ + \Delta_+ \left( \frac{\partial L_-}{\partial v} \big( \sigma (\ast) \big) \right)_{N-2} - \Delta_- \left( \frac{\partial L_+}{\partial v} \big( \sigma (\ast \ast) \big) \right)_{N-2} ,
\end{multline}
where $ \sigma (\ast) := \big(\sigma(\Q),\Delta_- \sigma(\Q),\sigma(\T),h \big)$ and $ \sigma (\ast \ast) := \big(\sigma(\Q),-\Delta_+ \sigma(\Q),\sigma(\T),h \big)$. Consequently, we have:
\begin{multline}
P^{\T}_{N-1} \big( \Q \big) = \frac{\partial L_-}{\partial x} (\ast_{N-1}) + \frac{\partial L_+}{\partial x} (\ast \ast_{N-1}) + \Delta_+ \left( \frac{\partial L_-}{\partial v} (\ast) \right)_{N-1} - \Delta_- \left( \frac{\partial L_+}{\partial v} (\ast \ast) \right)_{N-1}.
\end{multline}
This concludes the proof of Theorem \ref{thmhelmdisc}.

\section{Characterization of null (couples of) Lagrangians}
In this section, we are interested in the second part of the Helmholtz's problem both in the continuous and discrete cases. Precisely, once the Helmholtz's condition is satisfied, can we characterize all the possible (couples of) Lagrangians leading to the same second order (discrete) Euler-Lagrange equation ? %

\subsection{Reminder of the continuous case} Let $L^1$, $L^2$ be two Lagrangians. They are said to be \textit{equivalent} if they lead to the same second order Euler-Lagrange equation. In this case, we denote by $L^1 \sim L^2$. The linearity of the Euler-Lagrange equation with respect to its associated Lagrangian implies that $\sim$ defines an equivalence relation on the set of Lagrangians (see also \cite[p.356]{olve}). \\

Hence, the aim is to characterize the equivalence class of $0$. If a Lagrangian $L$ belongs to the equivalence class of $0$, then it leads to a null second order Euler-Lagrange equation in the sense that every curves $q$ are solutions. In this case, $L$ is said to be a \textit{null Lagrangian} (see \cite[p.247-249]{olve}). We refer to \cite[p.248,Theorem 4.7]{olve} for a detailed proof of the following result :

\begin{theorem}\label{thmnullcont}
Let $L$ be a Lagrangian. $L$ is a null Lagrangian if and only if there exist two functions $f : \R^2 \longrightarrow \R$ and $g : \R \longrightarrow \R$ such that :
\begin{equation}
\forall a <b, \; \forall q \in \CC^2([a,b],\R), \; L(q,\dot{q},t) = \dfrac{d}{dt} \big( f (q,t) \big) + g(t).
\end{equation}
\end{theorem}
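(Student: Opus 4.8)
The plan is to prove the two implications separately, dispatching the ``if'' direction by a direct computation and concentrating the effort on the ``only if'' direction. For sufficiency, suppose $L(x,v,t) = \frac{\partial f}{\partial x}(x,t)\,v + \frac{\partial f}{\partial t}(x,t) + g(t)$, which is exactly the form obtained by writing $\frac{d}{dt}(f(q,t)) + g(t)$ in the variables $(x,v,t)$. I would then substitute into the left-hand side of \eqref{el}: since $\frac{\partial L}{\partial v} = \frac{\partial f}{\partial x}(x,t)$ and $\frac{\partial L}{\partial x} = \frac{\partial^2 f}{\partial x^2}v + \frac{\partial^2 f}{\partial t \partial x}$, the total derivative $\frac{d}{dt}\big(\frac{\partial f}{\partial x}(q,t)\big) = \frac{\partial^2 f}{\partial x^2}\dot q + \frac{\partial^2 f}{\partial x \partial t}$ cancels $\frac{\partial L}{\partial x}$ exactly, using the smoothness hypothesis to equate the mixed partials $\frac{\partial^2 f}{\partial t \partial x} = \frac{\partial^2 f}{\partial x \partial t}$. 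Hence the Euler-Lagrange expression vanishes identically and $L$ is a null Lagrangian.

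For necessity, the key observation is to read the Euler-Lagrange expression as a function of the four independent arguments $(x,v,w,t)$ rather than along a fixed curve. Expanding the total derivative in \eqref{el} gives
\begin{equation}
\frac{\partial L}{\partial x}(x,v,t) - \frac{\partial^2 L}{\partial x \partial v}(x,v,t)\,v - \frac{\partial^2 L}{\partial v^2}(x,v,t)\,w - \frac{\partial^2 L}{\partial t \partial v}(x,v,t) = 0 .
\end{equation}
Since for any $(x,v,w,t)$ one can choose $q \in \CC^2$ with $q(t)=x$, $\dot q(t)=v$, $\ddot q(t)=w$, the null hypothesis forces this to hold as an identity in all four variables. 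The expression is affine in $w$, so the coefficient of $w$ must vanish: $\frac{\partial^2 L}{\partial v^2} \equiv 0$, i.e. $L$ is affine in $v$, say $L(x,v,t) = A(x,t)\,v + B(x,t)$. Feeding this back into the $w$-independent part collapses it to the single compatibility relation $\frac{\partial B}{\partial x} = \frac{\partial A}{\partial t}$.

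It remains to reconstruct $f$ and $g$ from $A$ and $B$. I would set $f(x,t) := \int_{x_0}^{x} A(s,t)\,ds$, fixing any base point $x_0$, so that $\frac{\partial f}{\partial x} = A$ and, differentiating under the integral and invoking the compatibility relation, $\frac{\partial f}{\partial t}(x,t) = \int_{x_0}^x \frac{\partial A}{\partial t}(s,t)\,ds = \int_{x_0}^x \frac{\partial B}{\partial x}(s,t)\,ds = B(x,t) - B(x_0,t)$. Choosing $g(t) := B(x_0,t)$ then yields $A v + B = \frac{\partial f}{\partial x}v + \frac{\partial f}{\partial t} + g$, which is the desired form. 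The only genuinely delicate point is the passage from ``$L$ is null along every curve'' to ``the Euler-Lagrange symbol vanishes as a function of the free jet variables $(x,v,w,t)$'', together with the resulting affine-in-$w$ argument; once that is in hand, the vanishing $\frac{\partial^2 L}{\partial v^2}=0$ and the potential construction are routine. This mirrors the self-adjointness mechanism underlying the Helmholtz condition used earlier, here specialized to the degenerate case where the Euler-Lagrange expression is identically zero.
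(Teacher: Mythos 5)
Your proof is correct, and it is worth noting that the paper does not actually prove Theorem \ref{thmnullcont}: it defers entirely to Olver \cite{olve}, so your argument supplies a proof the paper omits. Your route is the standard elementary one, and every step holds up: realizing an arbitrary jet $(x,v,w,t)$ by a quadratic curve $q(s) = x + v(s-t) + \tfrac{w}{2}(s-t)^2$ legitimately promotes the null hypothesis to the identity $\frac{\partial L}{\partial x} - \frac{\partial^2 L}{\partial x \partial v}\,v - \frac{\partial^2 L}{\partial v^2}\,w - \frac{\partial^2 L}{\partial t \partial v} = 0$ in four free variables; affineness in $w$ then gives $\frac{\partial^2 L}{\partial v^2} \equiv 0$, hence $L = A(x,t)v + B(x,t)$ with the compatibility relation $\frac{\partial B}{\partial x} = \frac{\partial A}{\partial t}$, and your potential construction $f(x,t) = \int_{x_0}^{x} A(s,t)\,ds$, $g(t) = B(x_0,t)$ closes the argument (the paper's blanket smoothness convention in Section \ref{section2} covers the mixed partials and differentiation under the integral). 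What is instructive is that your proof is the precise continuous analogue of the paper's own proof of the discrete counterpart, Theorem \ref{thmnulldisc}: your passage from ``null along every curve'' to an identity in free jet variables matches the paper's quantification over all $\T \in \TT$ and $\Q \in \RN$; your extraction of the $w$-coefficient and the relation $\frac{\partial B}{\partial x} = \frac{\partial A}{\partial t}$ correspond to the paper's differentiation of the discrete Euler-Lagrange identity with respect to the outlying variables $y$ and $z$, which yields $\frac{\partial^2 \ell_-}{\partial x_1 \partial x_2}(x,y,t,\xi) + \frac{\partial^2 \ell_+}{\partial x_1 \partial x_2}(y,x,t-\xi,\xi) = 0$ and the separation $\ell_- + \ell_+ = \alpha + \beta$; and your construction of $(f,g)$ mirrors the paper's $f(x,t,\xi) = \xi\,\alpha(x,t,\xi)$, $g(t,\xi) = \gamma(t-\xi,\xi)$. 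Your direct jet-variable argument buys a short, self-contained proof at the level of this one theorem, whereas the citation to Olver buys the placement of the result in the general divergence characterization of null Lagrangians; your version has the added merit of making the continuous/discrete parallel of the paper explicit. One minor presentational point: in the sufficiency direction you should state (as you only do implicitly) that the hypothesis ``for all $a<b$ and all $q \in \CC^2([a,b],\R)$'' is equivalent to the pointwise identity $L(x,v,t) = \frac{\partial f}{\partial x}(x,t)\,v + \frac{\partial f}{\partial t}(x,t) + g(t)$, again because every $(x,v,t)$ is realized along some curve.
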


Let us note that the previous theorem is usually stated with $g=0$. Indeed, we have just to add an anti-derivative of $g$ to $f$. However, in the next section, we can prove the discrete version of Theorem \ref{thmnullcont} only with this presentation.

\subsection{The discrete case} We give the following discrete versions of the definitions and results of the previous section. 

\begin{definition}
Let $(L^1_-,L^1_+) $ and $(L^2_-,L^2_+) $ be two couples of Lagrangians. We say that they are \textit{equivalent} if they lead to the same discrete second order Euler-Lagrange equation \eqref{elh}. In this case, we denote by $(L^1_-,L^1_+) \sim (L^2_-,L^2_+)$. The linearity of the discrete Euler-Lagrange equation with respect to its associated couple of Lagrangians implies that $\sim$ defines an equivalence relation on the set of couple of Lagrangians. If a couple of Lagrangians $(L_-,L_+)$ belongs to the equivalence class of $0$, then it leads to a null second order discrete Euler-Lagrange equation in the sense that every discrete curves $\Q$ are solutions. In this case, $(L_-,L_+)$ is said to be a \textit{null couple of Lagrangians}.
\end{definition}

The next result characterize the set of null couple of Lagrangians. 

\begin{theorem}\label{thmnulldisc}
Let $(L_-,L_+)$ be a couple of Lagrangians. $(L_-,L_+)$ is a null couple of Lagrangians if and only if there exist two functions $f : \R^2 \times \R^+_* \longrightarrow \R$ and $g : \R \times \R^+_* \longrightarrow \R$ such that for any $\T \in \TT$, any $\Q \in \RN$ and any $p=1,...,N$:
\begin{equation}\label{eq999}
L_- \big(Q_{p},(\Delta_- Q)_p,t_p,h \big) + L_+ \big(Q_{p-1},(-\Delta_+ Q)_{p-1},t_{p-1},h \big) = \Delta_- \big( f (\Q,\T,h) \big)_p +g(t_p,h).
\end{equation}
\end{theorem}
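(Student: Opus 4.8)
The plan is to reduce both implications to a single algebraic identity expressing the discrete Euler--Lagrange operator \eqref{elh} as a sum of two partial derivatives of the combined Lagrangian density. I introduce the function
\[
F(x,y,t,\xi) := L_-\Big(y,\tfrac{y-x}{\xi},t,\xi\Big) + L_+\Big(x,\tfrac{y-x}{\xi},t-\xi,\xi\Big),
\]
and observe that, since $(\Delta_-\Q)_p = (-\Delta_+\Q)_{p-1} = (Q_p-Q_{p-1})/h$ and $t_{p-1}=t_p-h$, the left-hand side of \eqref{eq999} is exactly $F(Q_{p-1},Q_p,t_p,h)$. First I would expand the terms of the discrete Euler--Lagrange expression at index $p$, regroup those evaluated with the backward slope $(\Delta_-\Q)_p$ (involving $Q_{p-1}$) and those evaluated with the forward slope $(-\Delta_+\Q)_p$ (involving $Q_{p+1}$), and verify by the chain rule that
\[
(\mathrm{EL}_p) = \frac{\partial F}{\partial y}\big(Q_{p-1},Q_p,t_p,h\big) + \frac{\partial F}{\partial x}\big(Q_p,Q_{p+1},t_{p+1},h\big),
\]
where $(\mathrm{EL}_p)$ denotes the left-hand side of \eqref{elh} at index $p$. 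Here $\partial_1$ will abbreviate the partial derivative of a function with respect to its first argument.

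For the sufficient condition, I assume \eqref{eq999}, i.e. $F(x,y,t,\xi)=\big(f(y,t,\xi)-f(x,t-\xi,\xi)\big)/\xi + g(t,\xi)$. Then $\partial F/\partial y(x,y,t,\xi)=\tfrac{1}{\xi}\partial_1 f(y,t,\xi)$ and $\partial F/\partial x(x,y,t,\xi)=-\tfrac{1}{\xi}\partial_1 f(x,t-\xi,\xi)$, so the two terms of the identity evaluated at the required arguments are $\tfrac{1}{h}\partial_1 f(Q_p,t_p,h)$ and $-\tfrac{1}{h}\partial_1 f(Q_p,t_{p+1}-h,h)$; since $t_{p+1}-h=t_p$ they cancel, whence $(\mathrm{EL}_p)=0$ for every $\Q$, $\T$ and $p$. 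Thus $(L_-,L_+)$ is a null couple.

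For the necessary condition, I suppose $(L_-,L_+)$ is null, so $(\mathrm{EL}_p)=0$ for all $\Q\in\RN$, all $\T\in\TT$ and all $p=1,\dots,N-1$. Because $N\ge 4$ and any partition may be translated, the consecutive components $Q_{p-1},Q_p,Q_{p+1}$ together with $(t_p,h)$ realise arbitrary values $(a,b,c,t,h)\in\R^3\times\R\times\R^+_*$; hence the identity gives $\partial F/\partial y(a,b,t,h)+\partial F/\partial x(b,c,t+h,h)=0$ identically. Differentiating in $a$ yields $\partial^2 F/\partial x\partial y\equiv 0$, so $F$ separates as $F(x,y,t,\xi)=A(x,t,\xi)+B(y,t,\xi)$. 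Substituting back leaves $\partial_1 A(b,t+h,h)=-\partial_1 B(b,t,h)$; integrating in $b$ produces a function $g$ with $A(x,t,\xi)=-B(x,t-\xi,\xi)+g(t,\xi)$. Setting $f(y,t,\xi):=\xi\,B(y,t,\xi)$ then gives exactly $F(x,y,t,\xi)=\big(f(y,t,\xi)-f(x,t-\xi,\xi)\big)/\xi+g(t,\xi)$, which is \eqref{eq999} for every $p=1,\dots,N$.

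The main obstacle is establishing the regrouping identity cleanly: one must track all the terms of \eqref{elh}, matching the backward terms with $\partial F/\partial y$ at index $p$ and the forward terms with $\partial F/\partial x$ at index $p+1$, while carefully handling the time shifts $t_{p\pm1}=t_p\pm h$ and the factors $\pm 1/h$ coming from the finite differences. Once this identity is in place both directions are short; the only remaining care is the bookkeeping of the integration constant in the necessary part, which is routine, and the justification that arbitrary arguments are attainable, which follows from the freedom in choosing the components of $\Q$ and in translating $\T$.
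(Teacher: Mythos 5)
Your proof is correct, and its core mechanism coincides with the paper's: reduce to two-point Lagrangian densities in the variables $(Q_{p-1},Q_p)$, show that the null condition kills the mixed second derivative, separate variables, and integrate the residual first-order relation; your $B$, $A$ and $g$ are the paper's $\alpha$, $\beta$ and $\gamma$ (up to the ordering of arguments and a shift in $t$), and your $f=\xi B$ is exactly the paper's $f=\xi\alpha$. The genuine difference is in the packaging. You merge the paper's two functions $\ell_-$, $\ell_+$ into the single function $F$ and run both implications through the pointwise master identity
\begin{equation*}
(\mathrm{EL}_p)=\dfrac{\partial F}{\partial y}\big(Q_{p-1},Q_p,t_p,h\big)+\dfrac{\partial F}{\partial x}\big(Q_p,Q_{p+1},t_{p+1},h\big),
\end{equation*}
which I have checked is correct (the time shift $t_{p+1}-h=t_p$ and the factors $\pm 1/h$ do come out right, bearing in mind that the paper's $\Delta_+$ is the negative of the usual forward difference). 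The paper instead argues variationally in both directions: sufficiency by telescoping the action $\LL^{\T}(\Q)$ down to $f(Q_N,t_N,h)-f(Q_0,t_0,h)+h\sum_p g(t_p,h)$, so that every $\Q$ is a discrete critical point; and, in the necessity, the relation between the two separated pieces is extracted by a second discrete calculus of variations on the functional, rather than by substituting the separated form back into the pointwise identity as you do. Your route is shorter and more uniform, with a single bookkeeping computation reused twice, and it avoids re-invoking the variational characterization; the paper's telescoping proof of sufficiency has the small advantage of never differentiating $f$, whereas your sufficiency step uses $\partial f/\partial x$ --- harmless here, since the identity $F(x,y,t,\xi)=\big(f(y,t,\xi)-f(x,t-\xi,\xi)\big)/\xi+g(t,\xi)$ together with the smoothness of $L_{\pm}$ forces $f$ to be smooth in its first argument, but this one-line remark should be added, since $f$ is not among the data the paper assumes smooth.
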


\begin{proof}
\textbf{\textit{Sufficiency}}. Let us assume that Equation \eqref{eq999} is true and let $\LL^{\T}$ denote the discrete Lagrangian functional associated to $(L_-,L_+)$ and to a partition $\T \in \TT$. Then, we have for any $\T \in \TT$ and any $\Q \in \RN$:
\begin{eqnarray*}
\LL^{\T} (\Q) & = & h \di \sum_{p=1}^N L_- \big( Q_p, (\Delta_- \Q)_p,t_p , h \big) + h \di \sum_{p=0}^{N-1} L_+ \big( Q_p, (-\Delta_+ \Q)_p,t_p , h \big) \\
& = & h \di \sum_{p=1}^N \Big[ L_- \big( Q_p, (\Delta_- \Q)_p,t_p , h \big) + L_+ \big(Q_{p-1},(-\Delta_+ Q)_{p-1},t_{p-1},h \big) \Big] \\
& = & h \di \sum_{p=1}^N \Big[ \Delta_- \big( f (\Q,\T,h) \big)_p + g(t_p,h) \Big]\\
& = & f (Q_N,t_N,h) - f (Q_0,t_0,h) + h \di \sum _{p=1}^N g(t_p,h).
\end{eqnarray*}
Consequently, since the set of discrete variations is $\RN_0$, every discrete curves $\Q \in \RN$ are discrete critical points of $\LL^{\T}$ and then the discrete Euler-Lagrange equation associated is null. Then, $(L_-,L_+)$ is a null couple of Lagrangian. \\

\textbf{\textit{Necessity}}. We assume that $(L_-,L_+)$ is a null couple of Lagrangian. Then, for any $\T \in \TT$ and any $\Q \in \RN$, it holds:
\begin{multline}\label{eqprooffin4}
\di \frac{\partial L_-}{\partial x} ( \Q,\Delta_- \Q,\T,h )+\di \frac{\partial L_+}{\partial x} ( \Q,-\Delta_+ \Q,\T,h ) \\ + \Delta_+ \left( \frac{\partial L_-}{\partial v} ( \Q,\Delta_- \Q,\T,h) \right) - \Delta_- \left( \frac{\partial L_+}{\partial v} ( \Q,-\Delta_+ \Q,\T,h) \right)  = 0. 
\end{multline}
Then, let us define:
\begin{equation}
\fonction{\ell_-}{\R^3 \times \R^+_*}{\R}{(x_1,x_2,t,\xi)}{L_- \Big( x_1,\dfrac{x_1-x_2}{\xi},t, \xi \Big)}
\end{equation}
and
\begin{equation}
\fonction{\ell_+}{\R^3 \times \R^+_*}{\R}{(x_1,x_2,t,\xi)}{L_+ \Big( x_1,\dfrac{x_2-x_1}{\xi},t, \xi \Big).}
\end{equation}
Since Equality \eqref{eqprooffin4} is true for any $\T \in \TT$ and any $\Q \in \RN$, we have for any $(x,y,z,t,\xi) \in \R^4 \times \R^+_*$:
\begin{equation}
\dfrac{\partial \ell_-}{\partial x_1} (x,y,t,\xi)+\dfrac{\partial \ell_-}{\partial x_2} (z,x,t+\xi,\xi) +\dfrac{\partial \ell_+}{\partial x_1} (x,z,t,\xi)+\dfrac{\partial \ell_+}{\partial x_2}(y,x,t-\xi,\xi)=0.
\end{equation}
Then, by differentiating the previous equality with respect to $y$ or to $z$, we obtain for any $(x,y,t,\xi) \in \R^3 \times \R^+_*$:
\begin{equation}
\dfrac{\partial^2 \ell_-}{\partial x_1 \partial x_2} (x,y,t,\xi) + \dfrac{\partial^2 \ell_+}{\partial x_1 \partial x_2} (y,x,t-\xi,\xi) = 0.
\end{equation}
Consequently, there exist two functions $\alpha, \beta : \R^2 \times \R^+_* \longrightarrow \R$ such that for any $(x,y,t,\xi) \in \R^3 \times \R^+_*$:
\begin{equation}\label{prooffin2}
\ell_- (x,y,t,\xi) + \ell_+ (y,x,t-\xi,\xi) = \alpha (x,t,\xi) + \beta (y,t,\xi).
\end{equation}
Moreover, let us denote by $\LL^{\T}$ the discrete Lagrangian functional associated to $(L_-,L_+)$ and to a partition $\T \in \TT$. Then, we have for any $\T \in \TT$ and any $\Q \in \RN$:
\begin{eqnarray*}
\LL^{\T} (\Q) & = & h \di \sum_{p=1}^N L_- \big( Q_p, (\Delta_- \Q)_p,t_p , h \big) + h \di \sum_{p=0}^{N-1} L_+ \big( Q_p, (-\Delta_+ \Q)_p,t_p , h \big) \\
& = & h \di \sum_{p=1}^N \Big[ L_- \big( Q_p, (\Delta_- \Q)_p,t_p , h \big) + L_+ \big(Q_{p-1},(-\Delta_+ Q)_{p-1},t_{p-1},h \big) \Big] \\
& = & h \di \sum_{p=1}^N \Big[ \ell_- (Q_p,Q_{p-1},t_p,h) + \ell_+ (Q_{p-1},Q_p,t_{p-1},h) \Big] \\
& = & h \di \sum_{p=1}^N \Big[ \alpha (Q_p,t_p,h) + \beta (Q_{p-1},t_{p},h) \Big].
\end{eqnarray*}
With a discrete calculus of variations, we obtain for any $\T \in \TT$, any $\Q \in \RN$ and any $\W \in \RN_0$:
\begin{equation}
D\LL^{\T}(\Q)(\W) = \di \sum_{p=1}^{N-1} \Big[ \dfrac{\partial \alpha}{\partial x_1} (Q_p,t_p,h) + \dfrac{\partial \beta}{\partial x_1} (Q_p,t_{p+1},h) \Big] W_p.
\end{equation}
Since $(L_-,L_+)$ is a null couple of Lagrangian, $D\LL^{\T}(\Q)(\W) = 0$ for any $\T \in \TT$, any $\Q \in \RN$ and any $\W \in \RN_0$. Consequently, for any $\T \in \TT$ and any $\Q \in \RN$, it holds:
\begin{equation}
\dfrac{\partial \alpha}{\partial x_1} (Q_p,t_p,h) + \dfrac{\partial \beta}{\partial x_1} (Q_p,t_{p+1},h) = 0.
\end{equation}
Consequently, we have for any $(x,t,\xi) \in \R^2 \times \R^+_*$:
\begin{equation}
\dfrac{\partial \alpha}{\partial x_1} (x,t,\xi) + \dfrac{\partial \beta}{\partial x_1} (x,t+\xi,\xi) = 0.
\end{equation}
Hence, there exists a function $\gamma : \R \times \R^+_* \longrightarrow \R$ such that:
\begin{equation}
\forall (x,t,\xi) \in \R^2 \times \R^+_*, \; \alpha (x,t,\xi) + \beta (x,t+\xi,\xi) = \gamma (t,\xi).
\end{equation}
Then, according to Equality \eqref{prooffin2}, we have for any $(x,y,t,\xi) \in \R^3 \times \R^+_*$:
\begin{equation}
\ell_- (x,y,t,\xi) + \ell_+ (y,x,t-\xi,\xi) = \alpha (x,t,\xi) - \alpha(y,t-\xi,\xi) + \gamma (t-\xi,\xi).
\end{equation}
Thus, we have for any $\T \in \TT$, any $\Q \in \RN$ and any $p=1,...,N$:
\begin{multline}
L_- \big( Q_p, (\Delta_- \Q)_p,t_p , h \big) + L_+ \big(Q_{p-1},(-\Delta_+ Q)_{p-1},t_{p-1},h \big) \\
\begin{array}{ll}
= & \ell_- (Q_p,Q_{p-1},t_p,h) + \ell_+ (Q_{p-1},Q_p,t_{p-1},h) \\ 
= & \alpha (Q_p,t_p,h) - \alpha(Q_{p-1},t_{p-1},h) + \gamma (t_{p-1},h)
\end{array}
\end{multline}
Hence, let us define:
\begin{equation}
\fonction{f}{\R^2 \times \R^+_*}{\R}{(x,t,\xi)}{\xi \alpha (x,t,\xi)}
\end{equation}
and
\begin{equation}
\fonction{g}{\R \times \R^+_*}{\R}{(t,\xi)}{\gamma(t-\xi,\xi).}
\end{equation}
Then, we have for any $\T \in \TT$, any $\Q \in \RN$ and any $p=1,...,N$:
\begin{equation}
L_- \big( Q_p, (\Delta_- \Q)_p,t_p , h \big) + L_+ \big(Q_{p-1},(-\Delta_+ Q)_{p-1},t_{p-1},h \big) = \Delta_- \big( f (\Q,\T,h) \big)_p +g(t_p,h).
\end{equation}
This concludes the proof.
\end{proof}

\bibliographystyle{plain}

\end{document}